\newtheorem{thm}{Theorem}[section]
\newtheorem{prop}[thm]{Proposition}
\newtheorem{cor}[thm]{Corollary}
\newtheorem{question}[thm]{Question}
\newtheorem{fact}[thm]{Fact}
\theoremstyle{definition}
\newtheorem{df}[thm]{Definition}
\newtheorem{ex}[thm]{Example}
\newtheorem{conj}[thm]{Conjecture}
\theoremstyle{remark}
\newcommand{\Z}{\mathbb{Z}}
\newcommand{\n}{\mathbb{N}}
\renewcommand{\to}{\rightarrow}
\def \<{\langle}
\def \>{\rangle}
\def \*Z {{{^*}\Z}}
\def \((  {(\!(}
\def \)) {)\!)}
\def \st {\operatorname {st}}
\numberwithin{equation}{section}
\def \BD{\operatorname{BD}}
\def\indsym#1#2{%
  \setbox0=\hbox{$\m@th#1x$}%
  \kern\wd0%
  \hbox to 0pt{\hss$\m@th#1\mid$\hbox to 0pt{$\m@th#1^{#2}$}\hss}%
  \lower.9\ht0\hbox to 0pt{\hss$\m@th#1\smile$\hss}%
  \kern\wd0}
\def\dotminussym#1#2{%
  \setbox0=\hbox{$\m@th#1-$}%
  \kern.5\wd0%
  \hbox to 0pt{\hss\hbox{$\m@th#1-$}\hss}%
  \raise.6\ht0\hbox to 0pt{\hss$\m@th#1.$\hss}%
  \kern.5\wd0}
\def\nindsym#1#2{%
  \setbox0=\hbox{$\m@th#1x$}%
  \kern\wd0%
  \hbox to 0pt{\hss$\m@th#1\not$\kern1.4\wd0\hss}
  \hbox to 0pt{\hss$\m@th#1\mid$\hbox to 0pt{$\m@th#1^{\,#2}$}\hss}%
  \lower.9\ht0\hbox to 0pt{\hss$\m@th#1\smile$\hss}%
  \kern\wd0}
\newcommand{\starA}{{}^{\ast}A}
\newcommand{\starB}{{}^{\ast}B}
\newcommand{\starN}{{}^{\ast}\n}
\def \FS{\operatorname{FS}}
\title{On supra-SIM sets of natural numbers}
\author{Isaac Goldbring and Steven Leth}
\thanks{Goldbring's work was partially supported by NSF CAREER grant DMS-1349399.}
\address {Department of Mathematics, University of California, Irvine, 340 Rowland Hall (Bldg.\# 400), Irvine, CA, 92697-3875.}
\email{isaac@math.uci.edu}
\urladdr{http://www.math.uci.edu/~isaac}
\address {School of Mathematical Sciences, University of Northern Colorado, Campus Box 122, 501 20th Street, Greeley, Co 80639}
\email{steven.leth@unco.edu}
\urladdr{http://www.unco.edu/nhs/mathematical-sciences/faculty/leth.aspx}
\begin{document}

\begin{abstract}
We introduce the class of supra-SIM sets of natural numbers.  We prove that this class is partition regular and closed under finite-embeddability.  We also prove some results on sumsets and SIM sets motivated by their positive Banach density analogues.     
\end{abstract}

\maketitle

\section{Introduction}

Ramsey theory on the integers can crudely be described as the study of \emph{partition regular} properties of the integers, namely those properties $\mathcal{P}$ of integers such that, whenever $A\subseteq \n$ has $\mathcal{P}$ and $A=B\sqcup C$ (disjoint union), then at least one of $B$ or $C$ has property $\mathcal{P}$.  Here are some of the more prominent examples of partition regular properties of the integers:

\begin{itemize}
\item having infinite cardinality (Pigeonhole principle);
\item having arbitrary long arithmetic progressions (van der Waerden's theorem);
\item containing a set of the form $$\FS(X):=\{a_1+\cdots +a_n \ : \ a_1,\ldots,a_n\in X \text{ distinct}, n\in\n\}$$ for some infinite set $X$ (Hindman's theorem);
\item being piecewise syndetic;
\item having positive Banach density.
\end{itemize}

In this paper, we introduce a new partition regular property of the natural numbers, namely that of being \emph{supra-SIM}.  SIM\footnote{SIM stands for the \emph{standard interval measure} property.} sets were introduced by the second author in \cite{Leth} in connection with  
Stewart and Tijdeman's result that intersections of difference sets of sets of positive density are syndetic.  This property arises from an analogous natural property of \emph{internal} subsets of the nonstandard natural numbers $\starN$ in the sense of nonstandard analysis.  While one can prove an analog of the aforementioned result of Stewart and Tijdeman by replacing the hypothesis of positive Banach density with the assumption of SIM, it was pointed out that the SIM property has some unusual features that should not lead one to view it simply as a notion of largeness.  In particular, it was shown that a SIM set $A$ has the property that all of its supersets are also SIM precisely when $A$ is syndetic.

Thus, it is natural to consider the class of \emph{supra-SIM} sets, which we define to be the class of sets which contain a SIM set.  In this article, we show that the class of supra-SIM sets has better combinatorial features than the class of SIM sets itself.  In particular, we show that this class is partition regular and is closed under \emph{finite-embeddability}, neither of which are true for the class of SIM sets.  We achieve these results by proving a simple nonstandard characterization of being supra-SIM.

In the final section, we continue the theme of proving analogues of results for positive Banach density with (supra-)SIM assumptions by considering results on sumsets.  Indeed, we prove the SIM analogue of Jin's sumset theorem (\cite{jin}) as well as Nathanson's result from \cite{nathanson},  which yielded partial progress on \emph{Erd\H os' $B+C$ conjecture} (which was recently solved in \cite{Moreira}).

We assume that the reader is familiar with basic nonstandard analysis as it pertains to combinatorial number theory.  Alternatively, one can consult the recent manuscript \cite{DiGoldLup}, which also contains a chapter on SIM sets.  Nevertheless, we will recall the relevant definitions and facts about SIM sets in the next section.

We thank Mauro Di Nasso for useful conversations regarding this work.

\section{Preliminaries}

Let $I:=[y,z]$ be an infinite, hyperfinite interval.  Set $\st_I:=\st_{[y,z]}:I\to [0,1]$ to be the map $\st_I(a):=\st(\frac{a-y}{z-y})$.  For $A\subseteq \starN$ internal, we set $\st_I(A):=\st_I(A\cap I)$.  We recall that $\st_I(A)$ is a closed subset of $[0,1]$ and we may thus consider $\lambda_I(A):=\lambda(\st_I(A))$, where $\lambda$ is Lebesgue measure on $[0,1]$.  

We also consider the quantity $g_A(I):=\frac{d-c}{|I|}$, where $[c,d]\subseteq I$ is maximal so that $[c,d]\cap A=\emptyset$.

The main idea in what is to follow is the desire to compare the notions of making $g_A(I)$ small (an internal notion) and making $\lambda_I(A)$ large (an external notion).  There is always a connection in one direction, namely that if $\lambda_I(A)> 1-\epsilon$, then $g_A(I)<\epsilon$.  We now consider sets where there is also a relationship in the other direction.

\begin{df}
We say that $A$ has the \emph{interval-measure property}\index{interval-measure property} (or \emph{IM property}) on $I$ if for every $\epsilon>0$, there is $\delta>0$ such that, for all infinite $J\subseteq I$ with $g_{A}(J)\leq \delta$, we have $\lambda_{J}(A)\geq 1-\epsilon$.
\end{df}

If $A$ has the IM property on $I$, we let $\delta(A,I,\epsilon)$ denote the supremum of the $\delta$'s that witness the conclusion of the definition for the given $\epsilon$.

It is clear from the definition that if $A$ has the IM property on an interval, then it has the IM property on every infinite subinterval.  Also note that it is possible that $A$ has the IM property on $I$ for a trivial reason, namely that there is $\delta>0$ such that $g_A(J)>\delta$ for every infinite $J\subseteq I$.  Let us temporarily say that $A$ has the \emph{nontrivial IM property} on $I$ if this does \emph{not} happen, that is, for every $\delta>0$, there is an infinite interval $J\subseteq I$ such that $g_A(J)\leq \delta$.  It will be useful to reformulate this in different terms.  In order to do that, we recall an important standard tool that is often employed in the study of sets with the IM property, namely the \emph{Lebesgue density theorem}.  Recall that for a measurable set $E\subseteq [0,1]$, a point $r\in E$ is a \emph{(one-sided) point of density of $E$} if
$$\lim_{s\to r^+}\frac{\mu(E\cap[r,s])}{s-r}=1.$$  The Lebesgue density theorem asserts that almost every point of $E$ is a density point of $E$.

\begin{fact}
Suppose that $A\subseteq \starN$ is internal and $I$ is an infinite, hyperfinite interval such that $A$ has the IM property on $I$.  Then the following are equivalent:
\begin{enumerate}
\item There is an infinite subinterval $J$ of $I$ such that $A$ has the nontrivial IM property on $J$.
\item There is an infinite subinterval $J$ of $I$ such that $\lambda_J(A)>0$.
\end{enumerate}
\end{fact}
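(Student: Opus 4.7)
The plan is to prove each implication separately. For $(1) \Rightarrow (2)$, suppose $A$ has the nontrivial IM property on some infinite subinterval $J$ of $I$. Since $A$ has the IM property on $I$, it has the IM property on $J$ as well. I would apply the definition of the IM property on $J$ with $\epsilon := 1/2$ to obtain some $\delta > 0$ such that every infinite $K \subseteq J$ with $g_A(K) \leq \delta$ satisfies $\lambda_K(A) \geq 1/2$. By the nontrivial IM property on $J$, one can choose such a $K$, and then $\lambda_K(A) \geq 1/2 > 0$ witnesses (2).

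For $(2) \Rightarrow (1)$, I would invoke the Lebesgue density theorem. Assume $J = [y, z]$ is an infinite subinterval of $I$ with $\lambda_J(A) > 0$, so that the closed set $E := \st_J(A) \subseteq [0,1]$ has positive Lebesgue measure. Pick a right-sided density point $r \in E$ with $r < 1$ (which exists since $\lambda(E \cap [0,1)) > 0$), and choose $a \in A \cap J$ with $\st_J(a) = r$. For each standard $\delta \in (0, 1/2]$, choose a standard $t_\delta \in (0, 1-r)$, monotonically decreasing in $\delta$, such that
$$\lambda\bigl(E \cap [r, r+t_\delta]\bigr) \geq \bigl(1 - \tfrac{\delta}{2}\bigr)\, t_\delta,$$
and pick $b_\delta \in J$ with $\st_J(b_\delta) = r + t_\delta$. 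Set $J_\delta := [a, b_\delta]$; this is an infinite subinterval of $J$, and monotonicity of $t_\delta$ ensures $J_\delta \subseteq J_{1/2}$ whenever $\delta \leq 1/2$.

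The central computation is to check $g_A(J_\delta) \leq \delta$. If $[c,d] \subseteq J_\delta$ is the maximal $A$-gap, then any $e \in A$ with $\st_J(c) < \st_J(e) < \st_J(d)$ would satisfy $c < e < d$, contradicting $(c,d) \cap A = \emptyset$; hence the open interval $(\st_J(c), \st_J(d)) \subseteq [r, r+t_\delta]$ is disjoint from $E$, forcing $\st_J(d) - \st_J(c) \leq (\delta/2)\, t_\delta$. Since $(b_\delta - a)/(z-y) \approx t_\delta$, the standard part of $g_A(J_\delta) = (d-c)/(b_\delta - a)$ is at most $\delta/2$, and as $\delta$ is standard this gives $g_A(J_\delta) \leq \delta$ (with the edge case $\st_J(c) = \st_J(d)$ giving infinitesimal $g_A(J_\delta)$). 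Setting $J_0 := J_{1/2}$, the family $\{J_\delta\}_{\delta \leq 1/2}$ then witnesses the nontrivial IM property of $A$ on $J_0$, establishing (1).

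The main obstacle I anticipate is the standard/nonstandard bookkeeping in the central computation: one needs to check rigorously that standard parts of gap endpoints produce a \emph{genuinely $E$-disjoint} open interval in $[0,1]$, and then convert the standard-part estimate on $g_A(J_\delta)$ into the desired inequality. Once this is handled, both directions reduce to pairing the IM-property definition with the Lebesgue density theorem as sketched.
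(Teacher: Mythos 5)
Your proof is correct: the forward direction pairs nontriviality with the IM definition at $\epsilon=1/2$, and the converse is exactly the Lebesgue-density-point argument that the paper signals by introducing that theorem immediately before stating this Fact (the paper itself supplies no proof). The only blemish is the phrase ``monotonically decreasing in $\delta$,'' which should say increasing (or just $t_\delta\le t_{1/2}$ for $\delta\le 1/2$) for the containment $J_\delta\subseteq J_{1/2}$ to hold --- and even that is dispensable, since one may take the witnessing interval for (1) to be $J$ itself, on which $A$ already has the IM property inherited from $I$.
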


In practice, the latter property in the previous proposition is easier to work with.  Consequently, let us say that $A$ has the \emph{enhanced IM property on $I$} if it has the IM property on $I$ and $\lambda_I(A)>0$.

In the proof of our main partition regularity result, the following \emph{internal} partition regularity theorem will be essential:  

\begin{thm}\label{IMpartreg}
Suppose that $A$ has the enhanced IM property on $I$.  Further suppose that $A\cap I=B_1\cup\cdots \cup B_n$ with each $B_i$ internal.  Then there is $i$ and infinite $J\subseteq I$ such that $B_i$ has the enhanced IM property on $J$.
\end{thm}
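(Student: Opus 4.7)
The plan is to induct on $n$, reducing by grouping to the case $n = 2$. Writing $A \cap I = B \cup C$ with $B, C$ internal and disjoint, we have $\st_I(A) = \st_I(B) \cup \st_I(C)$, so finite subadditivity of Lebesgue measure forces one of $\lambda_I(B), \lambda_I(C)$ to be positive; assume the former without loss of generality. Set $E_B := \st_I(B)$ and $E_C := \st_I(C)$.

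The main idea is to zoom in around a density point of $E_B$ where $E_C$ is sparse. Using the Lebesgue density theorem, I would select $r \in (0,1)$ that is a density point of $E_B$ and a density-$0$ point of $E_C$; almost every point of $E_B \setminus E_C$ serves. The degenerate case $\lambda(E_B \setminus E_C) = 0$ (where $E_B \subseteq E_C$ up to null sets) can be treated by swapping the roles of $B$ and $C$, and the doubly degenerate case $E_B = E_C$ essentially by iterating the argument at a finer scale, where the standard parts of $B$ and $C$ typically separate. For each small standard $s > 0$, let $J_s \subseteq I$ be the hyperfinite subinterval with $\st_I(J_s) = [r-s, r+s]$; then $\lambda_{J_s}(B) \to 1$ and $\lambda_{J_s}(C) \to 0$ as $s \to 0^+$, so $\lambda_{J_s}(B) > 0$ for small $s$.

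For the IM property of $B$ on $J_s$, fix $\epsilon > 0$ and set $\delta := \delta(A, I, \epsilon/2)$. For any infinite $J' \subseteq J_s$ with $g_B(J') \leq \delta$, the inclusion $B \subseteq A$ gives $g_A(J') \leq \delta$, so $\lambda_{J'}(A) \geq 1 - \epsilon/2$ by the IM of $A$ inherited on $J_s$. Combined with $\lambda_{J'}(A) \leq \lambda_{J'}(B) + \lambda_{J'}(C)$ (finite subadditivity applied to $\st_{J'}$), the desired $\lambda_{J'}(B) \geq 1 - \epsilon$ reduces to the uniform estimate $\lambda_{J'}(C) \leq \epsilon/2$. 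For $J'$ whose image under $\st_I$ has positive standard length in $[r-s, r+s]$, this follows from the density-$0$ of $E_C$ at $r$ after shrinking $s$. The main technical obstacle is the case when $J' \subseteq J_s$ is infinite but $|J'|/|I|$ is infinitesimal, so that $\st_I(J')$ collapses to a single point and the macro-scale density estimate gives no control over $C$ inside $J'$; I would attempt to resolve this residual case either by iterating the density-point construction at the scale of $J'$ (treating $A \cap J'$ as a fresh instance of the problem and invoking saturation to terminate) or by a combinatorial argument using the syndeticity of $B$ inside $J'$ implied by $g_B(J') \leq \delta$ together with the inherited IM of $A$ on $J'$.
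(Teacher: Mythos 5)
Your overall strategy (induct, reduce to two pieces, use subadditivity to get $\lambda_I(B)>0$) starts the same way as the paper's, but the core of your argument has two genuine gaps. First, the device of choosing a point $r$ that is a density point of $E_B=\st_I(B)$ and a density-$0$ point of $E_C=\st_I(C)$ is unavailable in general: since $\st_I$ is far from injective, the disjointness of $B$ and $C$ gives no disjointness of $E_B$ and $E_C$, and the case $E_B=E_C=[0,1]$ (e.g.\ $B$ the even and $C$ the odd elements of $I$) is the \emph{typical} situation, not a degenerate one. Your proposed fix of ``iterating at a finer scale, where the standard parts typically separate'' does not work, because they need never separate at any scale. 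Second, and more seriously, the case you flag as ``the main technical obstacle'' --- an infinite $J'\subseteq J_s$ with $|J'|/|I|$ infinitesimal --- is exactly where the content of the theorem lives, and neither of your two suggested resolutions is carried out. The measures $\lambda_{J'}$ at different scales are essentially unrelated: $\st_I(C\cap J')$ can be a single point while $\lambda_{J'}(C)=1$, so no amount of macro-scale density information about $E_C$ near $r$ controls $\lambda_{J'}(C)$, and the ``iterate and invoke saturation to terminate'' idea has no termination mechanism. As written, the proof establishes the IM inequality only for subintervals $J'$ of non-infinitesimal relative length, which is not enough.

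The paper avoids all multi-scale density analysis with a dichotomy that you may find instructive. Either some $B_i$ misses an entire infinite subinterval $J$ on which $\lambda_J(A)>0$ --- in which case $A\cap J$ is covered by the remaining $n-1$ pieces and induction applies --- or else \emph{every} $B_i$ meets \emph{every} infinite subinterval $J$ with $\lambda_J(A)>0$. In the second case one shows directly that each $B_i$ has the IM property on $I$ with the same modulus as $A$: if $g_{B_i}(J)\le\delta(A,I,\epsilon)$ then $\lambda_J(A)\ge 1-\epsilon$, and any standard interval $[r,s]$ disjoint from $\st_J(B_i)$ lifts to a hyperfinite interval $[x,y]$ missing $B_i$, which by the standing hypothesis forces $\lambda_{[x,y]}(A)=0$ and hence $\lambda_J(A\cap[x,y])=0$; summing over the complementary intervals of the closed set $\st_J(B_i)$ gives $\lambda_J(B_i)=\lambda_J(A)\ge 1-\epsilon$. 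Subadditivity then hands the \emph{enhanced} IM property to some $B_i$. No density points, and no comparison of $\lambda_{J}$ across scales, are needed.
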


\begin{proof}
We prove the theorem by induction on $n$.  The result is clear for $n=1$.  Now suppose that the result is true for $n-1$ and suppose $A\cap I=B_1\cup\cdots \cup B_n$ with each $B_i$ internal.  If there is an $i$ and infinite $J\subseteq I$ such that $B_i\cap J=\emptyset$ and $\lambda_J(A)>0$, then we are done by induction.  We may thus assume that whenever $\lambda_J(A)>0$, then each $B_i\cap J\not=\emptyset$.  We claim that this implies that each of the $B_i$ have the IM property on $I$.  Since there must be an $i$ such that $\lambda_I(B_i)>0$, for such an $i$ it follows that $B_i$ has the enhanced IM property on $I$.  

Fix $i$ and set $B:=B_i$.  Suppose that $J\subseteq I$ is infinite, $\epsilon>0$, and $g_B(J)\leq \delta(A,I,\epsilon)$; we show that $\lambda_J(B)\geq 1-\epsilon$.  Since $g_A(J)\leq g_B(J)\leq \delta(A,I,\epsilon)$, we have that $\lambda_J(A)\geq 1-\epsilon$.  Suppose that $[r,s]\subseteq [0,1]\setminus \st_J(B)$.  Then $r=\st_J(x)$ and $s=\st_J(y)$ with $\frac{y-x}{|J|}\approx s-r$ and $B\cap[x,y]=\emptyset$.  By our standing assumption, this implies that $\lambda_{[x,y]}(A)=0$, whence it follows that $\lambda_J(A\cap [x,y])=0$.  It follows that $\lambda_J(B)=\lambda_J(A)\geq 1-\epsilon$, as desired.
\end{proof}

We will need two other facts about SIM sets, both of which are implicit in \cite{Leth} but are spelled out in more detail in \cite{DiGoldLup}:

\begin{fact}\label{comparable}
If $A$ is an internal set that has the IM property on $I$, then there is $w\in \n$ and a descending hyperfinite sequence $I=I_0,I_1,\ldots,I_K$ of hyperfinite subintervals of $I$ such that:
\begin{itemize}
\item $|I_K|\leq w$;
\item $\frac{|I_{k+1}|}{|I_k}|\geq \frac{1}{w}$ for all $k<K$;
\item whenever $I_k$ is infinite, we have $\lambda_{I_k}(A)>0$.
\end{itemize}
\end{fact}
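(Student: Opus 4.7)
The plan is to construct the descending sequence by combining a Lebesgue density argument with the IM property and an overspill step. First I would reduce to the case $\lambda_I(A) > 0$ (i.e., $A$ has enhanced IM on $I$): this is essentially forced by the conclusion, since $I_0 = I$ is infinite and one needs $\lambda_{I_0}(A) > 0$. The Lebesgue density theorem applied to the closed positive-measure set $\st_I(A) \subseteq [0,1]$ then supplies a one-sided density point $r \in \st_I(A)$.

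Next I would set the parameters. Let $\delta_0 := \delta(A, I, 1/2)$, a standard positive IM witness, and choose a standard $s_0 > 0$ small enough that $\lambda(\st_I(A) \cap [r, r+s]) > (1 - \delta_0) s$ for every standard $s \in (0, s_0]$. Pick a standard integer $w \geq \max(2, \lceil 1/s_0 \rceil)$. Define the sequence internally by $I_0 := I$ and, for each $k \geq 1$, by letting $I_k$ be the hyperfinite subinterval of $I$ of length $\lfloor s_0 |I|/w^{k-1} \rfloor$ anchored at $y + \lceil r |I| \rceil$, so that $\st_I(I_k) = [r, r + s_0/w^{k-1}]$. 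The nesting $I_{k+1} \subseteq I_k$ and the ratio bound $|I_{k+1}|/|I_k| \geq 1/w$ hold by construction (any rounding loss can be absorbed by mildly enlarging $w$), and $K$ is taken to be the first index with $|I_K| \leq w$.

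It remains to verify $\lambda_{I_k}(A) > 0$ whenever $I_k$ is infinite. The case $k = 0$ is enhanced IM. For any standard $k \geq 1$, rescaling from the density-point estimate gives $\lambda_{I_k}(A) > 1 - \delta_0$; combined with the automatic implication $\lambda_J(A) > 1 - \delta_0 \Rightarrow g_A(J) < \delta_0$, every standard $k \geq 1$ lies in the internal set $S := \{k \in \starN : g_A(I_k) < \delta_0\}$. By overspill, $S$ contains an initial segment $[0, M]$ with $M$ nonstandard, and on this range the IM property on $I$ yields $\lambda_{I_k}(A) \geq 1/2 > 0$.

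The main obstacle is ensuring the overspill threshold $M$ dominates every $k$ with $|I_k|$ still infinite: overspill supplies only \emph{some} nonstandard $M$, so in principle there could remain nonstandard $k$ with $M < k \leq K$ and $|I_k|$ infinite for which the argument has not yet produced $\lambda_{I_k}(A) > 0$. I expect to handle this either by iterating the density-point construction inside $I_M$ (which itself inherits enhanced IM from $I$) and splicing the resulting countably many pieces together via $\aleph_1$-saturation, or by choosing $w$ and $s_0$ carefully so that $S$ is seen directly to contain every index on which $I_k$ is infinite. Reconciling the external condition ``$I_k$ infinite'' with the internal overspill range is the delicate point of the argument.
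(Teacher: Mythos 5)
First, a caveat: the paper itself gives no proof of Fact \ref{comparable} --- it is quoted from \cite{Leth} and \cite{DiGoldLup} --- so your argument has to stand on its own. The first half of it does: reducing to $\lambda_I(A)>0$, taking a density point $r$ of $\st_I(A)$, verifying $\lambda_{I_k}(A)>1-\delta_0$ for \emph{standard} $k\geq 1$ (the affine identity relating $\st_I$ on $I_k$ to $\st_{I_k}$ is valid precisely because $|I_k|/|I|$ has positive standard part when $k$ is standard), and overspilling the internal condition ``$g_A(I_j)<\delta_0$ for all $1\leq j\leq k$'' to some infinite $M$ are all correct, modulo minor bookkeeping (anchor on the side of $r$ that stays inside $I$; overspill the set of $k$ with $[1,k]\subseteq S$ rather than $S$ itself; $\delta_0$ is a supremum, so use strict inequalities, as you do).

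However, the step you yourself flag as delicate is a genuine gap, and it is exactly where the whole content of the lemma lives. Overspill produces \emph{one} uncontrolled infinite $M$, and $|I_M|=\lfloor s_0|I|/w^{M-1}\rfloor$ may still be infinite, leaving an entire tail of infinite indices $M<k\leq K$ about which nothing has been proved. Neither proposed repair closes this. The density point $r$ is a feature of $\st_I(A)$ only, so it carries no information about $A$ at scales $o(|I|)$: the largest gap of $A$ in $I_M$ may have length comparable to $\delta_0|I_M|$ (infinite), and once $|I_k|$ drops below that length the anchored interval can lie entirely inside a gap, giving $g_A(I_k)=1$ and $\lambda_{I_k}(A)=0$; no choice of $w$ and $s_0$ rules this out. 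Restarting the density argument inside $I_M$ fares no better: each restart requires a new density point of $\st_{I_{M}}(A)$, hence a new standard scale $s_1$ and a new ratio constant $w_1$, with no uniform standard bound on the constants across iterations (and the statement requires a single standard $w$); moreover countably many restarts need not reach a finite length, and splicing the resulting external $\omega$-sequence of internal chains into one hyperfinite sequence by saturation would require internal approximating conditions, the natural candidates for which (``$g_A(I_k)\leq\delta_0$ whenever $|I_k|\geq m$,'' $m$ standard) are each exactly as hard to arrange as the original assertion. So your argument controls the chain only down to one unspecified infinite scale; the descent through \emph{all} infinite scales to a finite one with a single standard $w$ --- the actual claim --- remains unproved, and you should consult the proof in \cite{DiGoldLup} for how this is handled.
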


\begin{fact}\label{corIMmain}
Suppose that $A_1,\ldots,A_n$ are internal sets that satisfy the IM property on $I_1,\ldots,I_n$ respectively.  Fix $\epsilon>0$ such that $\epsilon<\frac{1}{n}$.  Take $\delta>0$ with $\delta<\min_{i=1,\ldots,n}\delta(A_i,I_i,\epsilon)$.  Then there is $w\in \n$ such that, whenever $[a_i,a_i+b]$ satisfies $$[a_i,a_i+b]\subseteq I_i \text{ and }g_{A_i}([a_i,a_i+b])\leq \delta \text{ for all }i=1,\ldots,n,$$ then there is $c\in \starN$ such that
$$A_i\cap [a_i+c,a_i+c+w]\not=\emptyset \text{ for all }i=1,\ldots,n.$$
\end{fact}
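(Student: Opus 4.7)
The plan is to argue by contradiction using countable saturation.  Assume that for every standard $w \in \n$ there is a tuple $(a_1, \ldots, a_n, b)$ satisfying the hypotheses but failing the conclusion.  The collection of bad tuples is internal for each $w$ and decreases with $w$, so countable saturation produces a single tuple $(a_1, \ldots, a_n, b)$ bad for every standard $w$ simultaneously.

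For this fixed tuple, the IM property (applicable since $\delta < \delta(A_i, I_i, \epsilon)$) gives $\lambda_{[a_i, a_i+b]}(A_i) \geq 1 - \epsilon$ for each $i$.  Setting $S_i := \st_{[a_i, a_i+b]}(A_i)$ and using $\epsilon < 1/n$, subadditivity of Lebesgue measure yields
$$\lambda\left(\bigcap_{i=1}^n S_i\right) \geq 1 - n\epsilon > 0,$$
so by the Lebesgue density theorem I may fix $t \in \bigcap_i S_i$ that is a one-sided density point of every $S_i$.  This already gives, for each standard $\eta > 0$, elements $y_i \in A_i$ with $y_i - a_i - tb \in [0, \eta b]$, so the $y_i$'s lie in a common window of width $\eta b$; the difficulty is that this width is only $o(b)$ rather than standard.

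To convert this $o(b)$ closeness into a standard window, I would iterate a common-offset zoom, at each stage $k$ maintaining a common offset $c_k$ and width $\beta_k$ together with $\lambda_{[a_i+c_k, a_i+c_k+\beta_k]}(A_i)$ positive for every $i$.  The next stage is obtained by picking a density point in the common intersection of the current standard parts and shrinking each interval to a one-sided neighborhood of that point; the resulting $c_{k+1}$ depends only on $c_k$, $\beta_k$, the density point $t^{(k)}$, and the shrinkage ratio, and is therefore a common offset for all $i$.  The internal statement asserting the existence of a descending common-offset sequence of length $k$ holds for every standard $k$, so by overspill it holds for some hyperfinite $K$; combining this with Fact~\ref{comparable} applied to each $A_i$ on $[a_i, a_i+b]$ to control the shrinkage rate, one obtains at stage $K$ a common subinterval of \emph{standard} width $w$ with $A_i \cap [a_i + c_K, a_i + c_K + w] \neq \emptyset$ for all $i$, contradicting the badness of the tuple.

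The main obstacle I anticipate is preserving the measure budget $n\epsilon^{(k)} < 1$ throughout the iteration: a zoom via Lebesgue density alone typically incurs a measure loss comparable to the shrinkage ratio, so with a constant ratio this budget can be sustained for only standardly many steps --- insufficient to reduce the width from $b$ to standard scale.  The role of Fact~\ref{comparable} is to provide the additional structural control, in the form of a fixed standard shrinkage ratio depending only on the IM data of $A_i$ on $I_i$, that allows the iteration to continue through hyperfinitely many steps and terminate at a common subinterval of standard width $w$.
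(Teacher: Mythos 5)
A preliminary remark: the paper does not prove Fact~\ref{corIMmain} at all --- it is quoted from \cite{Leth} and \cite{DiGoldLup} --- so your argument has to stand on its own. Its first half does: countable saturation applied to the decreasing chain of internal sets of bad tuples yields a single $(a_1,\ldots,a_n,b)$ that is bad for every standard $w$; such a $b$ is necessarily infinite (for finite $b$ the hypothesis $g_{A_i}([a_i,a_i+b])\leq\delta<1$ already gives $A_i\cap[a_i,a_i+b]\neq\emptyset$ for every $i$, so $c=0$, $w=b$ would work); the IM property then gives $\lambda_{[a_i,a_i+b]}(A_i)\geq 1-\epsilon$, and $n\epsilon<1$ gives a common density point. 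You also correctly isolate the crux: this only produces elements of the $A_i$ in a common window of width $o(b)$, which may still be infinite.

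The second half is where the proof breaks down, and the obstacle you flag in your final paragraph is not a technicality but the whole content of the statement. The iterated ``common-offset zoom'' is constructed at each stage from a Lebesgue density point of an intersection of standard parts, i.e., from external data; the assertion ``there exists a descending common-offset sequence of length $k$ with $\lambda_{[a_i+c_k,\,a_i+c_k+\beta_k]}(A_i)>0$ for all $i$'' is therefore not internal, and overspill cannot be applied to it to reach a hyperfinite $K$. If you replace the measure condition by an internal surrogate such as $g_{A_i}\leq\delta$ on the zoomed window, you must prove that the surrogate survives the zoom with a \emph{common} offset for all $i$ simultaneously --- but a gap of some $A_i$ of length comparable to the new scale $\beta_{k+1}$ can sit exactly where the common density point forces you to place the next window, and ruling this out is precisely the original problem at a smaller scale. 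Fact~\ref{comparable} does not supply the missing uniformity either: applied to each $A_i$ on $[a_i,a_i+b]$ it produces $n$ unrelated descending chains of subintervals, with no control whatsoever on their relative positions, whereas everything here hinges on the offsets agreeing. (Even granting the iteration, ``some hyperfinite $K$'' from overspill need not be the one for which $\beta_K$ is standard.) A viable repair has to avoid intersecting standard parts altogether; the arguments in the cited sources work instead with the internal sets $F_i:=\{c\in[0,b]:A_i\cap[a_i+c,a_i+c+\nu]=\emptyset\}$ for infinite $\nu$, bound their internal cardinalities by $\epsilon(b+1)$ using the IM property (this is the real work, and it is exactly what your sketch does not deliver), conclude by a union bound and $n\epsilon<1$ that a common good $c$ exists for every infinite $\nu$, and finally obtain a standard $w$ by underflow in $\nu$.
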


%
%
%
%

We finally recall the definition of SIM sets:
\begin{df}
$A\subseteq \n$ has the \emph{standard interval-measure property}\index{standard interval-measure property} (or \emph{SIM property}) if:
\begin{itemize}
\item $\starA$ has the IM property on every infinite hyperfinite interval; 
\item $\starA$ has the enhanced IM property on some infinite hyperfinite interval.
\end{itemize}
\end{df}

It is possible to give a reformulation of SIM sets in completely standard terms; see \cite{Leth} for the details.  
\section{Supra-SIM sets and their properties}

We begin by noting that the collection of SIM sets is not closed under the operation of taking supersets.
\begin{ex}
Suppose that $B$ has the SIM property but is not syndetic.  Then as shown in \cite{Leth}, there is $A\supseteq B$ such that $A$ is not SIM.
\end{ex}
This implies that not all piecewise syndetic sets are SIM sets.  As we will see below, the property of being a SIM set is also not partition regular.  It is thus more interesting to consider the notion of a ``supra-SIM'' set, defined below.

\begin{df}
$A\subseteq \n$ is \emph{supra-SIM} if there is $B\subseteq A$ such that $B$ has the SIM property.
\end{df}

\begin{ex}[\cite{Leth}]
Piecewise syndetic sets are supra-SIM.
\end{ex}
In \cite{Leth}, SIM sets of Banach density $0$ are constructed.  This implies that there are supra-SIM sets that do not have positive Banach density, and thus also are not piecewise syndetic.

In order to prove our main results on supra-SIM sets, we use a convenient nonstandard reformulation.  The next theorem is the core of the matter:

\begin{thm}\label{suprachar}
Suppose that $A\subseteq \n$ is such that $\starA$ has the enhanced IM property on some interval $I$.  Then $A$ is a supra-SIM set.
\end{thm}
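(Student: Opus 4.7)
The plan is to construct a SIM set $B\subseteq A$ as a sparse disjoint union $B=\bigsqcup_{n\in\n} F_n$ of finite blocks $F_n = A\cap[a_n,b_n]$, where each $F_n$ is a standard ``finite model'' of the behavior of $\starA\cap I$ and the sequence $(a_n)$ grows so rapidly that successive blocks are separated by gaps dwarfing the block sizes themselves. By transfer, $\starB = \bigcup_{N\in\starN}(\starA\cap[a_N,b_N])$, and the nonstandard blocks $[a_N,b_N]$ for infinite $N$ will each individually carry the enhanced IM property; the rapid growth will ensure that an arbitrary hyperfinite interval either lies essentially inside a single block or contains a noninfinitesimal relative gap.

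The first step is to extract finitary data. Set $\alpha:=\lambda_I(\starA)>0$. Using overspill applied to a suitable finitary form of the enhanced IM property, I would produce, for every $n\in\n$, standard integers $a_n<b_n$ with $b_n-a_n\geq n$, $a_{n+1}>2^{b_n}$, and with $A\cap[a_n,b_n]$ satisfying both a discrete approximation of the IM condition on $[a_n,b_n]$ with tolerance $1/n$ and a discrete approximation of $\lambda_I(\starA)\geq \alpha/2$ at scale $1/n$. By transfer, for each infinite $N\in\starN$ the internal block $\starA\cap[a_N,b_N]$ then enjoys the exact enhanced IM property on $[a_N,b_N]$, since the tolerance $1/N$ is infinitesimal.

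Setting $B:=\bigcup_n F_n$, I would verify SIM in two steps. For enhanced IM on some hyperfinite interval, pick $[a_N,b_N]$ with $N$ infinite; there $\starB$ coincides with $\starA\cap[a_N,b_N]$, which has enhanced IM by construction. For IM on an arbitrary infinite hyperfinite interval $J$, I would case-split on how $J$ meets the blocks: if $J$ meets no block, $\starB\cap J=\emptyset$ and IM holds trivially; if $J$ lies essentially within a single block $[a_N,b_N]$ (after excising infinitesimal end pieces), IM follows from IM of $\starA$ on $[a_N,b_N]$ and heredity of IM to subintervals; and if $J$ meets two or more blocks, the super-exponential spacing $a_{n+1}>2^{b_n}$ forces $g_{\starB}(J)$ to be noninfinitesimal, so the IM implication is vacuous for all sufficiently small $\delta$.

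The main obstacle is the first step: producing genuine standard finite blocks $[a_n,b_n]$ whose discrete approximation of IM is tight enough that the transferred hyperfinite blocks really have the exact nonstandard IM property. This demands a careful finitization of the (partially external) definitions of IM and $\lambda_I$ so that overspill and transfer can be applied cleanly. The subsequent case analysis is routine but requires attention to the relative sizes of $|J|$, $b_N-a_N$, and the inter-block gaps.
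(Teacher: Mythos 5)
Your overall strategy coincides with the paper's: finitize the IM data on $I$, pull back by transfer a sparse sequence of standard finite blocks $[a_n,b_n]$ on which $A$ satisfies the finitized conditions with tolerance $1/n$, set $B:=\bigcup_n (A\cap[a_n,b_n])$, and argue that the blocks of infinite index carry the exact enhanced IM property. The finitization you defer to the end is genuinely doable and is not the real problem: the paper records, for each standard $n,k$, a bound $M_{n,k}$ (obtained by underflow) such that any $J\subseteq I$ with $g_{\starA}(J)<\delta_k$ and $l(J)>M_{n,k}$ needs at least $n$ gaps of $\starA$ to cover a $\frac{1}{k}$ fraction of $J$, and imposes this (together with the existence of a long subinterval with gap ratio $<\frac1n$) as the defining property of the $n$-th block.

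The genuine gap is in your verification of the IM property of $\starB$ on an arbitrary infinite hyperfinite interval, specifically the third case. It is not true that a $J$ meeting two or more blocks must have $g_{\starB}(J)$ noninfinitesimal: your construction places no upper bound on $b_N$ in terms of $a_N$, so the last block $[a_M,b_M]$ met by $J$ can be far longer than the gap $a_M-b_{M-1}$ preceding it. For instance, if $b_M/a_M$ is infinite and $A$ happens to have only tiny gaps on block $M$, then $J=[a_{M-1},b_M]$ meets two blocks while every gap of $\starB$ in $J$ has infinitesimal relative length. Moreover, even in configurations where each such $J$ individually has noninfinitesimal gap ratio, the infimum of these ratios over all multi-block $J$ can still be $0$, so there is no single \emph{standard} $\delta>0$ rendering the case vacuous --- and the IM definition requires a standard $\delta$ uniform in $J$. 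So the multi-block case must be argued, not dismissed. The paper's treatment is the model to follow: assuming $g_{\starB}(J)\le\frac12\delta_k$ and letting $M$ be the largest block index met by $J$, the growth condition $a_n>nb_{n-1}$ makes every point of $J$ lying in an earlier block infinitesimal relative to $l(J)$ (so $\st_J$ sends all of them to $0$), while the gap hypothesis forces $a_M$ into the first $\delta_k$ portion of $J$ and, if $J$ overshoots the block, $b_M$ into the last $\delta_k$ portion; hence $l(J\cap I_M)\ge(1-2\delta_k)\,l(J)$ and $g_{\starB}(J\cap I_M)\le\delta_k$, at which point the transferred finitized IM condition of block $M$ applies to $J\cap I_M$. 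Your "essentially within a single block after excising infinitesimal end pieces" case is the right template, but the end pieces that must be excised have relative length of order $\delta_k$ rather than infinitesimal, and as stated your trichotomy leaves exactly this configuration uncovered.
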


\begin{proof}
Without loss of generality, $I\subseteq {}^{\ast}\n\setminus \n$.  For each $\epsilon>0$, fix $\delta(\epsilon)<\min(\delta(A,I,\epsilon),\epsilon,\frac{1}{4})$.
For ease of notation, we set $\delta_k:=\delta(\frac{1}{k})$.  By underflow, for each $n,k\in \n$, there exists $M_{n,k}\in \n$ such
that whenever a subinterval $J$ of $I$ satisfies $g_{^{\ast}A}(J)<\delta_k$ and $l(J)>M_{n,k}$, then it takes the sum of the lengths of at least $n$
gaps of $\starA$ on $J$ to add to $
\frac{l(J)}{k}$. \ Since $\lambda_{I}($ $^{\ast}A)>0$, for each $n$
there exists an infinite subinterval $J$ of $I$ such that $g_{\starA}(J)<\frac{1}{n}$. \ 

By transfer, we may inductively define a sequence of pairwise disjoint
intervals $\left(  I_{n}\right)  $ in $\mathbb{N}$ satisfying the following properties:
\begin{enumerate}
\item[(i)] Writing $I_{n}=[a_{n},b_{n}]$, we have $a_{n}>nb_{n-1}$.
\item[(ii)] $I_{n}$ has a subinterval of length at least $n$ with $g_{A}(J)<\frac
{1}{n}$.
\item[(iii)] For all $k\leq n$ and for all $J\subseteq I_{n}$, if $\left\vert
J\right\vert >M_{n,k}$ and $g_{A}(J)<\delta_k$, then at least $n$
gaps of $A$ on $J$ are required to cover at least $\frac{l(J)}{k}$. 
\end{enumerate}
Set $B:=\bigcup_{n}(A\cap I_{n})$.  We claim that $B$ has the SIM property.

Let $I^{\prime}$ be an infinite hyperfinite interval. \ We show that $^{\ast
}B$ has the IM property on $I^{\prime}$ as witnessed by the function $\delta'(\epsilon):=\frac{1}{2}\delta_k$, where $\frac{1}{k}<\epsilon$. \ Fix $\epsilon>0$ and consider an infinite subinterval $J$ of $I^{\prime}$ such that $g_{^{\ast}B}(J)\leq \frac{1}{2}\delta_k$. \ 

By condition (i), If $J$ intersects more than one of the $I_{K}$, with the
largest such index being $M$, then every point in any $J\cap I_{K}$ with $K<M$
is less than $\frac{1}{M}a_{M}$, and so is infinitesimal compared to the
length of $J$ (which is at least $a_{M}-b_{M-1}$). \ Thus, all these points
are mapped to 0 by the $\st_{J}$ mapping.  Next note that $a_M$ must be within the first $\delta_k$ portion of $J$, else $g_{\starB}(J)\geq \delta_k$.  If the right endpoint of $J$ is at most $b_M$, we then have that $l(J\cap I_M)\geq (1-\delta_k)l(J)$.  If $J$ ends after $I_M$, then again we see that $b_M$ must occur in the last $\delta_k$ portion of $J$, so  $l(J\cap I_M)\geq (1-2\delta_k)l(J)$.  In either case, we have $l(J\cap I_M)\geq (1-2\delta_k)l(J)$.

%
%
It follows that
$$g_{^{\ast}B}(J\cap I_{M})\leq g_{\starB}(J)\cdot \frac{l(J)}{l(J\cap I_M)}\leq \frac{\delta_k}{2(1-2\delta_k)}\leq \delta_k.$$
Since $g_{^{\ast}B}(J\cap I_{M})=g_{^{\ast}A}(J\cap I_{M})$ and it requires
$M$ gaps of $\starA$ to add to $\frac{l(J)}{k}$, we see that $\lambda_J(^{\ast
}B)\geq 1-\frac{1}{k}>(1-\epsilon)$, as desired. \

It remains to show that $\starB$ has the enhanced IM property on some interval.  To see that, observe that if $N>\n$, then $I_N$ has a subinterval $J$ of size at least $N$ with $g_{\starA}(J)\leq \delta_N\approx 0$; since $g_{\starB}(J)=g_{\starA}(J)$, we see that $\starB$ has the enhanced IM property on $J$.
\end{proof}

Here is our promised nonstandard reformulation of supra-SIM sets:

\begin{cor}
$A$ is supra-SIM if and only if there is $B\subseteq A$ and infinite hyperfinite $I$ such that $\starB$ has the enhanced IM property on $I$.
\end{cor}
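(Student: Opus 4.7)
The plan is that this corollary is essentially a packaging of Theorem \ref{suprachar} together with the definition of SIM, and no new ideas are needed.

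For the forward direction, suppose $A$ is supra-SIM. By definition there is $B \subseteq A$ with the SIM property. The second clause in the definition of SIM says exactly that $\starB$ has the enhanced IM property on some infinite hyperfinite interval $I$, which gives the desired witness.

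For the reverse direction, suppose $B \subseteq A$ and $I$ are as hypothesized, so that $\starB$ has the enhanced IM property on $I$. Apply Theorem \ref{suprachar} with $B$ in place of $A$: the hypothesis of that theorem is exactly that $\starB$ has the enhanced IM property on some interval, so we conclude that $B$ is supra-SIM. Unwinding this, there is $C \subseteq B$ with the SIM property. Since $C \subseteq B \subseteq A$, this also witnesses that $A$ is supra-SIM.

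There is no real obstacle here; the only subtle point is just to notice that Theorem \ref{suprachar}, as stated, takes a set of natural numbers whose nonstandard extension has the enhanced IM property on some interval and produces a SIM subset, which is precisely what one needs to feed the containment through to $A$.
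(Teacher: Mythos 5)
Your proof is correct and follows exactly the paper's argument: the forward direction reads off the second clause of the definition of SIM, and the reverse direction applies Theorem \ref{suprachar} to $B$ and passes the resulting SIM subset up to $A$. Nothing is missing.
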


\begin{proof}
If $A$ is supra-SIM, then there is $B\subseteq A$ that is SIM.  By definition of SIM, this $B$ is as desired.  Conversely, if $B$ and $I$ are as in the condition, then $B$ is supra-SIM by the theorem, whence so is $A$.
\end{proof}


The partition regularity of supra-SIM now follows easily:

\begin{cor}\label{PR}
The notion of being a supra-SIM set is partition regular.
\end{cor}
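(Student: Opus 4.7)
My plan is to derive partition regularity directly from the nonstandard characterization in the preceding corollary, combined with the internal partition regularity theorem (Theorem \ref{IMpartreg}). Starting from a supra-SIM set $A$ and a finite partition $A = C_1 \sqcup \cdots \sqcup C_n$, I would first invoke the corollary to extract $B \subseteq A$ and an infinite hyperfinite interval $I$ such that $\starB$ has the enhanced IM property on $I$. The aim is then to pull the partition back from the $C_j$'s to pieces of $\starB$, apply internal partition regularity to produce a single piece with enhanced IM on some subinterval, and finally push forward through the corollary in the other direction to conclude that some $C_j$ is supra-SIM.

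Concretely, I would set $B_j := B \cap C_j$ for $j = 1, \ldots, n$, so that the $B_j$ partition $B$. By transfer, the internal sets ${}^{\ast}B_j$ partition $\starB$; in particular, $\starB \cap I = \bigcup_{j=1}^n ({}^{\ast}B_j \cap I)$ is a finite union of internal sets. Since $\starB$ has the enhanced IM property on $I$, Theorem \ref{IMpartreg} supplies an index $j$ and an infinite subinterval $J \subseteq I$ on which ${}^{\ast}B_j$ has the enhanced IM property. Applying the preceding corollary to the inclusion $B_j \subseteq C_j$ with witnessing interval $J$ then shows that $C_j$ is supra-SIM.

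There is really no serious obstacle here: the entire argument is a clean chaining of (i) the nonstandard characterization of supra-SIM given by the corollary and (ii) the internal partition regularity theorem. The only bookkeeping is the verification that each ${}^{\ast}B_j$ is internal, which is immediate since $B_j$ is standard, and that the ${}^{\ast}B_j$ partition $\starB$, which is immediate by transfer from the fact that the $B_j$ partition $B$. In effect, the hard work has already been done in the proofs of Theorem \ref{IMpartreg} and Theorem \ref{suprachar}, and this corollary is just the expected combinatorial payoff.
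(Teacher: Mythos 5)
Your proposal is correct and follows essentially the same route as the paper: extract $B\subseteq A$ with $\starB$ having the enhanced IM property on some $I$, intersect with the cells of the partition, apply Theorem \ref{IMpartreg}, and conclude via the nonstandard characterization. The only cosmetic difference is that you handle an $n$-cell partition directly while the paper writes out the two-cell case.
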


\begin{proof}
Suppose that $A$ is supra-SIM and $A=C\sqcup D$.  Take $B\subseteq A$ SIM.  Take infinite $I$ such that $\starB$ has the enhanced IM property on $I$.  Then by Theorem \ref{IMpartreg}, we have, without loss of generality, that ${}^{\ast}(B\cap C)$ has the enhanced IM property on some infinite subinterval of $I$.  It follows from the previous corollary that $C$ is supra-SIM.
\end{proof}

\begin{cor}
Every supra-SIM set is contained in an ultrafilter consisting entirely of supra-SIM sets.
\end{cor}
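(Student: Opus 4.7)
The plan is to apply the standard Zorn's lemma construction producing ultrafilters inside upward-closed, partition regular families. Let $\F$ denote the class of supra-SIM subsets of $\n$. Two observations: first, $\F$ is upward closed, since any SIM witness $B \subseteq A$ for $A \in \F$ also witnesses that every superset of $A$ is supra-SIM; second, $\F$ is partition regular by Corollary \ref{PR}.

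Consider the poset $\cP$ of all filters $\cU$ on $\n$ with $A \in \cU \subseteq \F$, ordered by inclusion. The principal filter generated by $A$ lies in $\cP$ (using upward closure of $\F$), and $\cP$ is closed under unions of chains, so Zorn's lemma yields a maximal element $\cU^*$. The main step is to show $\cU^*$ is an ultrafilter. For any $B \subseteq \n$, adding $B$ to $\cU^*$ produces a filter still contained in $\F$ if and only if $B \cap C \in \F$ for every $C \in \cU^*$: the ``if'' direction follows from upward closure of $\F$, and the ``only if'' direction is immediate since each $B \cap C$ itself lies in the generated filter. Hence, if $\cU^*$ were not ultra, we could find $B$ with both $B \notin \cU^*$ and $\n \setminus B \notin \cU^*$, and maximality would yield $C_B, D_B \in \cU^*$ with $B \cap C_B \notin \F$ and $(\n \setminus B) \cap D_B \notin \F$.

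Setting $E := C_B \cap D_B \in \cU^* \subseteq \F$, consider the partition
$$E = (E \cap B) \sqcup (E \cap (\n \setminus B)).$$
By Corollary \ref{PR}, one of these two pieces is supra-SIM; but $E \cap B \subseteq C_B \cap B$ and $E \cap (\n \setminus B) \subseteq D_B \cap (\n \setminus B)$, so by upward closure of $\F$ neither piece is supra-SIM---a contradiction. Thus $\cU^*$ is an ultrafilter containing $A$ and contained in $\F$. There is no genuine obstacle; the only subtlety is the ``extension criterion'' characterizing when $\cU^* \cup \{B\}$ generates a filter still inside $\F$, which is flagged above.
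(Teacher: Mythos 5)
Your proof is correct and is exactly the standard argument the paper leaves implicit: the corollary is stated without proof as a consequence of Corollary \ref{PR}, relying on the well-known fact that an upward-closed, partition regular family contains an ultrafilter through each of its members, which is precisely the Zorn's lemma construction you carry out. The one point worth making explicit is that $\emptyset$ is not supra-SIM (so the extended filter is automatically proper), but your $\notin\F$ conditions already handle this.
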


\begin{ex}
Being SIM is not partition regular.  Indeed, consider
$$A:=\{1,3,4,7,8,9,13,14,15,16,\ldots\},$$ where $A$ continues to consist of $m$ elements in the set followed by $m$ elements that are not in the set, with $m$ increasing by 1 each time.  Then, if $k$ is large (but finite), on any infinite hyperfinite interval $I$ that consists of $k$ disjoint intervals that are in $\starA$ and $k$ disjoint intervals that are not in $\starA$, we have that $g_{\starA}(I)$ and $g_{{}^{\ast}(\mathbb N\setminus A)}(I)$ are both roughly equal to $1/(2k)$, while $\lambda_I(\starA)$ and $\lambda_I({}^{\ast}(\mathbb N\setminus A))$ are both $1/2$.

\end{ex}

The argument in the proof of Theorem \ref{suprachar} is robust enough to allow us to adapt it to prove another desirable property of supra-SIM sets that is also possessed by sets of positive Banach density.  Recall that $A$ is said to be \textit{finitely-embedded} in $B$ if, given any finite $F\subseteq A$, there is $t\in \n$ such that $t+F\subseteq B$.  (Equivalently, there is $t\in {}^{\ast}\n$ such that $t+A\subseteq \starB$.)  Note that if $A$ is finitely embedded in $B$ and $\BD(A)>0$, then $\BD(B)>0$.
  
\begin{thm}\label{finemb}
Suppose that $A$ is finitely embedded in $B$ and $A$ is supra-SIM.  Then $B$ is supra-SIM.
\end{thm}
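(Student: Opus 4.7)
The strategy is to construct a SIM set $B'\subseteq B$ directly, which immediately witnesses that $B$ is supra-SIM.

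Pick a SIM set $A'\subseteq A$, an infinite hyperfinite interval $I$ on which ${}^{\ast}A'$ has the enhanced IM property, and, following the notation of the proof of Theorem \ref{suprachar}, rationals $\delta_k<\min(\delta({}^{\ast}A',I,1/k),1/k,1/4)$ together with constants $M_{n,k}\in\n$ produced by underflow: whenever $J\subseteq I$ satisfies $g_{{}^{\ast}A'}(J)<\delta_k$ and $|J|>M_{n,k}$, it takes the sum of the lengths of at least $n$ gaps of ${}^{\ast}A'$ on $J$ to add to $|J|/k$.

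The crux of the argument is the simultaneous inductive choice of standard intervals $I_n\subseteq\n$ and shifts $t_n\in\n$ satisfying, with $I'_n:=t_n+I_n=:[a'_n,b'_n]$ and $C_{n-1}:=b'_{n-1}$:
\begin{itemize}
\item $I_n$ has a subinterval of length at least $n$ with $g_{A'}$--value $<1/n$;
\item for all $k\leq n$ and all $J\subseteq I_n$ with $|J|>M_{n,k}$ and $g_{A'}(J)<\delta_k$, at least $n$ gaps of $A'$ on $J$ are required to cover $|J|/k$;
\item $t_n+(A'\cap I_n)\subseteq B$;
\item $a'_n>n\cdot C_{n-1}$.
\end{itemize}
The first two conditions are witnessed nonstandardly by the interval $I$ itself (which has $\min(I)>N$ for every standard $N$), so transfer yields, for any $N\in\n$, a standard interval $I_n$ with those two properties and $\min(I_n)>N$. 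Taking $N=n\cdot C_{n-1}$ automatically secures the fourth condition for \emph{any} $t_n\geq 0$; since $A'\cap I_n$ is finite and $A$ is finitely embedded in $B$, finite embeddability then supplies the required $t_n\in\n$ fulfilling the third condition.

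Set $B':=\bigcup_n(t_n+(A'\cap I_n))\subseteq B$. Because translation preserves gap densities and the $\st_J$--map, $B'\cap I'_n=t_n+(A'\cap I_n)$ is a translate of $A'\cap I_n$, so the first two bullets above transfer verbatim to $B'$ and $I'_n$. Thus the pair $(B',(I'_n))$ plays precisely the role played by $(B,(I_n))$ in the proof of Theorem \ref{suprachar}, and the remainder of that proof---the verification that ${}^{\ast}B'$ has the IM property on every infinite hyperfinite interval (with witness $\frac{1}{2}\delta_k$ when $1/k<\epsilon$) together with the enhanced IM property on ${}^{\ast}I'_N$ for any infinite $N$---applies without change.

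The main subtlety is the separation condition $a'_n>n\cdot b'_{n-1}$: finite embeddability does not in general yield arbitrarily large shifts $t_n$ for a fixed finite set (e.g.\ embedding the perfect squares into themselves, where only $t=0$ works for $F=\{0,1,4\}$), so one cannot simply ``push $t_n$ to the right.'' The fix is to push the interval $I_n$ itself arbitrarily far to the right using transfer, so that any admissible shift $t_n\geq 0$ delivered by finite embeddability suffices to satisfy (i).
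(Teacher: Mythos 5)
Your proposal is correct and follows essentially the same route as the paper: it reduces to the construction of Theorem \ref{suprachar}, building $C=\bigcup_n\bigl(t_n+(A'\cap I_n)\bigr)\subseteq B$ and re-running that argument verbatim. The only immaterial difference is that the paper first overflows to a single hyperfinite interval $I\in\bigcap_n X_n$ that already carries an internal shift $t$ with $t+({}^{\ast}A\cap I)\subseteq{}^{\ast}B$ and then transfers interval and shift together, whereas you transfer the interval $I_n$ down first (pushed past $n\cdot b'_{n-1}$) and then invoke finite embeddability as a standard statement; your observation that one must move $I_n$ rather than $t_n$ to secure the separation condition is exactly the right point.
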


\begin{proof}
Without loss of generality, we may assume that $A$ is actually SIM.  For $n\in \n$, let $X_n$ be the set of intervals $I$ in ${}^{\ast}\n$ of length at least $n$ such that $g_{\starA}(I)\leq \frac{1}{n}$ and $t+(\starA\cap I)\subseteq \starB$ for some $t\in {}^{\ast}\n$.  Since $A$ is SIM and finitely-embeddable in $B$, each $X_n\not=\emptyset$.  Thus, by overflow, there is $I\in \bigcap_n X_n$.

As in the proof of Theorem \ref{suprachar}, we may use transfer to inductively define a sequence of pairwise disjoint
intervals $\left(  I_{n}\right)  $ in $\mathbb{N}$ and a sequence $(t_n)$ from $\n$ satisfying the following properties:
\begin{enumerate}
\item[(i)] Writing $t_n+I_{n}=[a_{n},b_{n}]$, we have $a_{n}>nb_{n-1}$.
\item[(ii)] $I_{n}$ has a subinterval of length at least $n$ with $g_{A}(J)<\frac
{1}{n}$.
\item[(iii)] For all $k\leq n$ and for all $J\subseteq I_{n}$, if $\left\vert
J\right\vert >M_{n,k}$ and $g_{A}(J)<\delta_k$, then at least $n$
gaps of $A$ on $J$ are required to cover at least $\frac{l(J)}{k}$.
\item[(iv)] $t_n+(A\cap I_n)\subseteq B$.
\end{enumerate}

Let $C:=\bigcup_n (t_n+(A\cap I_n))$.  As in the proof of Theorem \ref{suprachar}, $C$ has the SIM property.  By (iv), $C\subseteq B$, so $B$ is supra-SIM, as desired.
\end{proof}

Of course the previous proposition fails for SIM sets for, as mentioned in the introduction, they are almost never even closed under taking supersets.

We end this section by mentioning arguably the most pressing open question concerning supra-SIM sets:

\begin{question}
Are sets of positive Banach density supra-SIM?
\end{question}

Our results from this section yield a \emph{prima facie} simpler criterion for obtaining a positive solution to the previous question.  First recall that, for $A\subseteq \n$, the \emph{Shnirelmann density} of $A$ is $\sigma(A):=\inf_{n\geq 1}\frac{|A\cap [1,n]|}{n}$.

\begin{cor}
Suppose there is $\epsilon>0$ such that every set $A\subseteq \n$ with $\sigma(A)\geq 1-\epsilon$ is supra-SIM.  Then every set of positive Banach density is supra-SIM.
\end{cor}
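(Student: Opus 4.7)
The strategy is to combine the hypothesis with the two tools already developed: closure under finite-embeddability (Theorem~\ref{finemb}) and partition regularity (Corollary~\ref{PR}). Given $A \subseteq \n$ of positive Banach density $\alpha > 0$ and $\epsilon$ as in the hypothesis, my plan is to produce translates $t_1, \ldots, t_k \in \n$ and a set $B \subseteq \n$ with $\sigma(B) \geq 1 - \epsilon$ which is finitely embedded in $C := \bigcup_{i=1}^k (A - t_i)$. The hypothesis then makes $B$ supra-SIM, Theorem~\ref{finemb} propagates this to $C$, iterating Corollary~\ref{PR} on the disjointification of $C$ forces some $A - t_j$ to be supra-SIM, and the translation invariance of SIM---immediate from the definition of $\st_I$ applied to a shifted witness interval---delivers supra-SIM-ness of $A$ itself.

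The density-boosting step invokes the classical fact that any set of positive Banach density admits, for every $\epsilon' > 0$, finitely many translates whose union has Banach density at least $1 - \epsilon'$ (provable via Furstenberg correspondence and ergodic decomposition, or by a direct first-moment calculation on an interval where $A$ has density close to $\alpha$). Applied with $\epsilon' = \epsilon/4$, this supplies $t_1, \ldots, t_k \in \n$ with $\BD(C) \geq 1 - \epsilon/4$.

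To extract $B$, I would work nonstandardly. Fix an infinite hyperfinite interval $J = [a, a+N]$ with $|{}^{\ast}C \cap J|/|J| \geq 1 - \epsilon/4$, and apply (the internal transfer of) the one-sided Hardy--Littlewood maximal inequality to the indicator of $J \setminus {}^{\ast}C$, whose internal $L^1$-norm is at most $(\epsilon/4)|J|$. The set of $a' \in J$ for which some initial sub-interval $[a', a' + m - 1]$ with $m \leq |J|/2$ has gap density exceeding $\epsilon$ then has internal size at most $|J|/4$, so at least $|J|/4$ starting points in the first half $[a, a + |J|/2]$ satisfy $|{}^{\ast}C \cap [a', a' + m - 1]|/m \geq 1 - \epsilon$ for every $m \leq |J|/2$, in particular for every standard $m$. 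For such an $a'$, set $B := \{n \in \n : a' + n - 1 \in {}^{\ast}C\}$; then $\sigma(B) \geq 1 - \epsilon$, and $(a' - 1) + B \subseteq {}^{\ast}C$ witnesses that $B$ is finitely embedded in $C$.

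The principal obstacle is the density-boosting step: one must verify carefully that an arbitrary positive-Banach-density set can be covered, up to any $\epsilon$, by finitely many of its translates (this is standard for ergodic systems but requires ergodic decomposition in general). Once that fact is in hand, the Hardy--Littlewood extraction of $B$ and the chain of implications through Theorem~\ref{finemb}, Corollary~\ref{PR}, and translation invariance are routine.
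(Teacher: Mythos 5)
Your proposal is correct and follows essentially the same route as the paper: boost $\BD(A)$ to $\geq 1-\epsilon$ using finitely many translates, extract a finitely-embedded set of Shnirelmann density $\geq 1-\epsilon$, apply the hypothesis, then Theorem~\ref{finemb}, partition regularity (Corollary~\ref{PR}), and translation invariance. The only difference is cosmetic: where the paper cites \cite[Corollary 12.12]{DiGoldLup} for the Shnirelmann-density extraction, you supply a self-contained nonstandard maximal-inequality argument, and the density-boosting fact you worry about is elementary (no ergodic decomposition needed) and is likewise taken as known by the paper.
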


\begin{proof}
Suppose that $\epsilon$ is as in the hypothesis of the corollary and suppose that $\BD(A)>0$.  Take a finite $F\subseteq \n$ such that $\BD(A+F)\geq 1-\epsilon$.  Take $B\subseteq \n$ such that $B$ is finitely embedded in $A$ and $\sigma(B)\geq \BD(A+F)$ (see, for example, \cite[Corollary 12.12]{DiGoldLup}).  By assumption, $B$ is supra-SIM.  By Theorem \ref{finemb}, $A+F$ is supra-SIM.  By Corollary \ref{PR}, $A+i$ is supra-SIM for some $i\in F$.  It remains to observe that being supra-SIM is translation invariant.
\end{proof}

\section{SIMsets and sumsets}

\subsection{The sumset phenomenon}

One of the first successes of nonstandard methods in combinatorial number theory was the following theorem of Renling Jin:

\begin{fact}
Suppose that $A,B\subseteq \n$ are such that $\BD(A),\BD(B)>0$.  Then $A+B$ is piecewise syndetic.
\end{fact}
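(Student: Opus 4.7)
My plan is to prove Jin's sumset theorem by nonstandard means, using the same Loeb/Lebesgue density infrastructure that is deployed throughout the paper for SIM sets. First, I would invoke the standard nonstandard characterization of positive Banach density: $\BD(X)>0$ iff there exists an infinite hyperfinite interval $I$ with $\st(|{}^{\ast}X\cap I|/|I|)>0$, equivalently with positive Loeb measure for ${}^{\ast}X$ relative to the uniform counting measure on $I$. Applying this to both $A$ and $B$ and using that piecewise syndeticity is translation invariant, I would arrange that ${}^{\ast}A$ and ${}^{\ast}B$ both sit inside a common interval $I=[0,N-1]$ with $N\in{}^{\ast}\n\setminus\n$, with internal densities bounded below by $\alpha:=\BD(A)>0$ and $\beta:=\BD(B)>0$.

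Next I would push down to the standard unit interval. The normalized Loeb measure on $I$ pushes forward along $\st_I(x):=\st(x/N)$ to Lebesgue measure $\lambda$ on $[0,1]$, so the closed sets $\st_I({}^{\ast}A)$ and $\st_I({}^{\ast}B)$ have Lebesgue measure at least $\alpha$ and $\beta$ respectively. By the Lebesgue density theorem I would fix density points $r_A$ and $r_B$ of these two sets. Rescaling around $r_A$ and $r_B$ (and using overflow to lift from the standard density-point condition back to the nonstandard world) produces, for every $\epsilon>0$, infinite hyperfinite subintervals $J_A, J_B\subseteq I$ of comparable infinitesimal $\st_I$-length on which ${}^{\ast}A$ and ${}^{\ast}B$ retain internal densities arbitrarily close to the global densities $\alpha$ and $\beta$.

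The main step, and the principal obstacle, is to upgrade these positive-density statements into a \emph{uniformly bounded gap} on ${}^{\ast}(A+B)$. Positive Loeb density by itself does not preclude arbitrarily long gaps (witness $A=2\n$), so one must use the scale invariance of the Lebesgue density points together with a counting argument. The idea is to count representations: for each target $s\in J_A+J_B$, one has $s\in{}^{\ast}A+{}^{\ast}B$ iff $({}^{\ast}A\cap J_A)\cap\bigl(s-({}^{\ast}B\cap J_B)\bigr)\neq\emptyset$. A Fubini-type argument on the product Loeb measure on $J_A\times J_B$ under the sum map, combined with iteratively shrinking scales around $r_A,r_B$ to drive the relevant internal densities toward $1$, is then used to produce a standard $k\in\n$ and an infinite subinterval $I'\subseteq J_A+J_B$ such that every length-$k$ subinterval of $I'$ meets ${}^{\ast}A+{}^{\ast}B$. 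This is the genuinely delicate analytic step — it is what makes Jin's theorem non-trivial — and I would expect to devote the bulk of the proof to it.

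Finally, the nonstandard conclusion that there exist a standard $k\in\n$ and an infinite hyperfinite interval $I'$ such that every length-$k$ subinterval of $I'$ meets ${}^{\ast}(A+B)$ transfers immediately to the standard statement that $A+B$ is piecewise syndetic, via the nonstandard characterization of piecewise syndeticity. This completes the argument.
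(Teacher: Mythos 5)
There is a genuine gap here, and it sits exactly where you say it does. This Fact is Jin's sumset theorem itself; the paper does not prove it but cites it to \cite{jin} and only proves the SIM-analogue (using Fact \ref{corIMmain}), so your proposal has to stand on its own. Its first and last paragraphs are fine boilerplate (the nonstandard characterizations of Banach density and of piecewise syndeticity), but the entire mathematical content of the theorem is the ``main step'' in your third paragraph, and that step is not carried out: ``a Fubini-type argument on the product Loeb measure\dots is then used to produce a standard $k$'' is a description of a hoped-for conclusion, not an argument. Since you explicitly acknowledge that this is where the bulk of the proof would live, what you have written is an outline with the theorem's actual content left as a placeholder.

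Worse, the mechanism you sketch for that step rests on a false premise. A Lebesgue density point of the closed set $\st_I({}^{\ast}A)$ controls the measure of the \emph{standard-part image} near that point; it says nothing about the internal (Loeb) density of ${}^{\ast}A$ there, and in particular you cannot ``iteratively shrink scales\dots to drive the relevant internal densities toward $1$.'' For $A=2\n$ one has $\st_J({}^{\ast}A)=[0,1]$ for every infinite hyperfinite $J$, so every point is a density point, yet the internal density of ${}^{\ast}A$ is exactly $\tfrac12$ on every subinterval and can never be pushed above $\BD(A)$. (This conflation of $\lambda_J({}^{\ast}A)$ with internal density is precisely the distinction the paper is careful about; the IM property is designed to bridge it, and positive Banach density alone does not.) The genuine proofs of Jin's theorem use a different idea entirely --- e.g.\ Jin's original argument taking the infimum of the set of achievable densities over translates and deriving a contradiction from non-piecewise-syndeticity via a counting estimate, or the later ergodic/ultrafilter proofs --- and some such input is unavoidable. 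To repair the proposal you would need to replace the third paragraph with an actual proof of the bounded-gap statement; nothing in the surrounding Loeb-measure setup supplies it.
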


In this subsection, we prove the analogous result, replacing the positive Banach density assumption with a SIM assumption:

\begin{prop}
If $A$ and $B$ have the SIM property, then $A+B$ is piecewise syndetic.
\end{prop}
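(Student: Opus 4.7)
The plan is to adapt the classical measure-theoretic intersection argument for Jin's sumset theorem to the SIM setting, using Fact~\ref{corIMmain} to promote an approximate standard-part overlap into a standard-bounded overlap of internal points.

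By the SIM hypotheses, $\starA$ has the enhanced IM property on some infinite hyperfinite $I_A$ and $\starB$ on some $I_B$. Fix $\epsilon<\tfrac12$ and set $\delta_0:=\min(\delta(\starA,I_A,\epsilon),\delta(\starB,I_B,\epsilon))$. Using the nontriviality of the IM property together with Fact~\ref{comparable} and the Lebesgue density theorem to match lengths, pick infinite hyperfinite subintervals $J_A=[a_A,a_A+L]\subseteq I_A$ and $J_B=[a_B,a_B+L]\subseteq I_B$ of a common length $L$, each with sufficiently small gap, say $g_{\starA}(J_A),g_{\starB}(J_B)\leq\delta_0/8$.

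I claim there is $w\in\n$ such that the infinite hyperfinite interval $I^*:=[a_A+a_B+L/4,a_A+a_B+7L/4]$ is contained in $\star(A+B)+[-w,w]$, which by the nonstandard characterization of piecewise syndeticity yields the proposition. To prove the claim, fix $z\in I^*$ and set $K_z:=J_A\cap(z-J_B)$. A direct calculation shows $K_z$ is an interval of length $\ell_z\in[L/4,L]$ with $K_z\subseteq J_A$ and $z-K_z\subseteq J_B$. The internal sets $A_1:=\starA\cap K_z$ and $A_2:=(z-\starB)\cap K_z$ are subsets of $K_z$ with the IM property on $K_z$, inherited respectively from $\starA$ on $I_A$ and (by reflection) from $\starB$ on $I_B$; moreover one checks $g_{A_1}(K_z),g_{A_2}(K_z)\leq\delta_0/2$, the shrinkage factor of at most $4$ being absorbed into the choice above. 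Applying Fact~\ref{corIMmain} with $n=2$ to $(A_1,A_2)$ on the common interval $K_z$, taking the trivial sub-interval $K_z$ itself on both sides, yields $c\in\starN$ with $A_i\cap[\min K_z+c,\min K_z+c+w]\neq\emptyset$ for both $i=1,2$. Hence there exist $x\in\starA$ and $y\in\starB$ such that both $x$ and $z-y$ lie in a common window of width $w$, giving $|x+y-z|\leq w$ and witnessing $z\in\star(A+B)+[-w,w]$.

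The main technical obstacle is the uniformity of $w$ over $z\in I^*$: the input set $A_2=(z-\starB)\cap K_z$ to Fact~\ref{corIMmain} depends on $z$, so a priori the constant $w$ it produces might too. The saving observation is that $z-\starB$ on $z-K_z$ is a reflection of $\starB$ on $z-K_z\subseteq I_B$, so the IM function is translation/reflection invariant: $\delta(A_2,K_z,\epsilon)\geq\delta(\starB,I_B,\epsilon)$ for every $z$. Consequently the parameters $\epsilon$ and a suitable $\delta$ in Fact~\ref{corIMmain} may be fixed once for all $z$, and inspection of the proof of the fact (which derives $w$ from the comparable-sizes chain of Fact~\ref{comparable}, itself depending only on the IM function) shows that $w$ may be taken independent of $z$.
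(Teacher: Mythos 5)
Your overall strategy is the paper's: use Fact~\ref{comparable} and the Lebesgue density theorem to produce equal-length infinite intervals on which $\starA$ and (a reflection of) $\starB$ have infinitesimal gaps, then invoke Fact~\ref{corIMmain} with $n=2$ to find, for each target value $z$, a point of $\starA$ and a point of $z-\starB$ within a standard distance $w$ of each other, and conclude that ${}^{\ast}(A+B)$ has standardly bounded gaps on an infinite hyperfinite interval. The one place you deviate from the paper is exactly where the gap is: the uniformity of $w$ in $z$. You apply Fact~\ref{corIMmain} to the pair $(\starA\cap K_z,\ (z-\starB)\cap K_z)$ on the ambient interval $K_z$, taking the trivial subinterval $K_z$ on both sides. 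As stated, the Fact produces a $w$ depending on the input data $(A_i,I_i)$, and here those data vary with $z$; since $z$ ranges over a hyperfinite set, you cannot conclude that $\sup_z w(z)$ is finite. Your proposed repair --- that the $w$ of Fact~\ref{corIMmain} ``depends only on the IM function'' because it comes from the chain of Fact~\ref{comparable} --- is not established anywhere in the paper and is not true at face value: the $w$ of Fact~\ref{comparable} is extracted by overflow/saturation from the specific internal set and interval, not merely from the modulus $\epsilon\mapsto\delta(A,I,\epsilon)$, so two sets with the same modulus need not share a $w$.

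The correct fix is to use the uniformity already built into the \emph{statement} of Fact~\ref{corIMmain}: the quantifier over the subintervals $[a_i,a_i+b]\subseteq I_i$ comes \emph{after} $w$. Fix the data once --- $A_1:=\starA$ on $I_1:=J_A$, and $A_2:=\rho(\starB)$ on $I_2:=\rho(J_B)$ for a single affine reflection $\rho$ --- obtain one $w$, and then for each $z$ choose equal-length subintervals $[a_1,a_1+b]\subseteq I_1$ and $[a_2,a_2+b]\subseteq I_2$ whose offset $a_1-a_2$ encodes $z$ (your $K_z$ is precisely such a pair, read in the two coordinate systems, and your gap estimate $\le\delta_0/2$ for $\ell_z\ge L/4$ is the needed hypothesis). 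The common $c$ in the conclusion then yields $|x-(z-y)|\leq w$ with the \emph{same} $w$ for every $z$. This is exactly what the paper does with its translation parameter $m$, followed by overspill to pass from all finite $m$ to an infinite interval; your variant, which covers a fixed infinite interval $I^*$ directly, works equally well once $w$ has been made uniform in this way.
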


\begin{proof}
By Fact \ref{comparable} and the
Lebesgue density theorem, we can obtain intervals $I$ and $J$ of the same infinite length such that $\lambda_{I}($ $^{\ast}A)=\lambda_{J}($ $^{\ast}B)=1$. Fix $a\in ^{\ast}A\cap I$ and $b\in ^{\ast}B\cap
J$. \ Let $w\in$ $\mathbb{N}$ be as in Fact \ref{corIMmain} for $A_{1}:=$ $^{\ast
}A-a$, $A_{2}:=$ $b-$ $^{\ast}B$, $I_{1}:=I-a$, \ and $I_{2}:=J-b$. \ Then for any
finite $m$, $g_{A_{1}}(I_{1}+m)\approx0$ and $g_{A_{2}}(I_{2})\approx0$.
\ Thus, by the choice of $w$, there must exist $c\in$ $^{\ast}\mathbb{N}$ such
that
\[
A_{1}\cap\lbrack m+c,m+c+w]\not=\emptyset \text{ and }A_{2}\cap\lbrack c,c+w]\not=\emptyset.%
\]
If we fix $x\in A_{1}\cap\lbrack m+c,m+c+w]$ and $y\in A_{2}\cap\lbrack
c,c+w]$, then
\begin{align*}
x-y  & \in\left(  A_{1}-A_{2}\right)  \cap\lbrack m-w,m+w]\\
& =\text{ }\left(  \left(  ^{\ast}A-a\right)  -\left(  b-^{\ast}B\right)
\right)  \cap\lbrack m-w,m+w].
\end{align*}
This shows that there is an element of $^{\ast}A+$ $^{\ast}B$ in every
interval of the form $[a+b+m-w,a+b+m+w]$. \ By overspill, there is an infinite interval starting at $a+b$ in which there is no gap of $^{\ast}A+$
$^{\ast}B$ greater than $2w$, completing the proof.
\end{proof}

\subsection{Towards $B+C$ for SIMsets}

In \cite{erdos}, Erd\H os made the following conjecture:

\begin{conj}
Suppose that $A\subseteq \n$ is such that $\underline{d}(A)>0$.  Then there are infinite sets $B,C\subseteq \n$ such that $B+C\subseteq\n$.
\end{conj}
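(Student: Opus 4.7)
The statement as worded has a typographical error — $B+C\subseteq \n$ is vacuous — and plainly intends the classical Erd\H{o}s $B+C$ conjecture: $B+C\subseteq A$. This is a very deep problem (resolved only recently by Moreira--Richter--Robertson, as noted in the paper), and my plan is to attack it via Furstenberg's correspondence principle followed by structural ergodic theory.

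First I would apply the correspondence principle. Choosing a F\o lner sequence along which $|A\cap[1,N]|/N\to\underline{d}(A)$, I would construct an invertible measure-preserving system $(X,\mathcal{B},\mu,T)$ and a measurable set $E\subseteq X$ with $\mu(E)=\underline{d}(A)$ satisfying
\[
\mu\Bigl(\bigcap_{n\in F}T^{-n}E\Bigr)\ \leq\ \overline{d}\Bigl(\bigcap_{n\in F}(A-n)\Bigr)
\]
for every finite $F\subseteq\n$. Hence it suffices to produce infinite $B,C\subseteq\n$ such that $\mu\bigl(\bigcap_{b\in B_0,\,c\in C_0}T^{-(b+c)}E\bigr)>0$ for all finite $B_0\subseteq B$ and $C_0\subseteq C$: a standard compactness/diagonal argument then extracts a translate landing in $A$, at which point $B+C\subseteq A$ is achieved (after passing to infinite subsets, which remain infinite).

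Next I would invoke the Furstenberg--Zimmer structure theorem to decompose $(X,T)$ as a tower of compact extensions over an inverse limit of rotations, topped by a weak-mixing extension. On the weak-mixing piece the orbit of $\mathbf{1}_E$ under $T^{b+c}$ equidistributes, so no obstruction appears; the combinatorics is therefore concentrated on the compact part. There I would build $B=\{b_1<b_2<\cdots\}$ and $C=\{c_1<c_2<\cdots\}$ inductively: at stage $n+1$, the admissibility of the next pair reduces to finding a common return time of the underlying rotation to a fixed positive-measure set — a Bohr-type problem that is solved, at each finite depth, by Bogolyubov's theorem together with an approximation argument to lift return times from the Kronecker factor through the compact-extension tower.

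The main obstacle — and the reason this conjecture resisted attack for roughly sixty years — is the asymmetry between sumsets and difference sets in ergodic theory: return times along $b-c$ inhabit a Bohr set directly, whereas return times along $b+c$ couple the two growing sequences through a genuinely bilinear object. To control this I would pass to a joining of two copies of the system and analyze the characteristic factor for the bilinear averages $\frac{1}{N^2}\sum_{b,c\le N}T^{b+c}$; the delicate point is that each newly chosen $b_{n+1}$ must maintain positive joint intersection against \emph{every} previously fixed $c_j$, and simultaneously each new $c_{n+1}$ must do so against every prior $b_i$. Making this coordination survive infinitely many inductive steps — rather than stalling at some finite depth — is, in my estimation, essentially the entire content of the theorem, and the place where the existing SIM-machinery of the present paper would have to be supplemented by substantial new ideas (joinings, characteristic factors, and fine information about the Kronecker factor of products).
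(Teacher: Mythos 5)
You were asked to prove a statement that the paper itself does not prove: this is Erd\H{o}s's $B+C$ conjecture, stated in the paper as a \emph{conjecture} (with the typo $B+C\subseteq\n$ in place of $B+C\subseteq A$, which you correctly identified), and the paper merely cites Moreira--Richter--Robertson for its resolution. The paper's own contribution in that section is the far weaker Nathanson-type result for supra-SIM sets (infinite $B$ plus a \emph{finite} set $C$ of prescribed size), obtained from its Kazhdan lemma. So there is no proof in the paper to compare yours against; the only question is whether your sketch stands on its own. It does not.

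The decisive gap is in your very first reduction, the step you call routine. The correspondence principle gives you: for every finite $B_0\subseteq B$ and $C_0\subseteq C$, the set $T_{B_0,C_0}:=\{t\in\n : t+B_0+C_0\subseteq A\}$ is nonempty (indeed of positive upper density). But the ``standard compactness/diagonal argument'' you invoke to extract a single translate valid for all finite pieces simultaneously is exactly what fails: the sets $T_{B_0,C_0}$ have the finite intersection property, yet they live in the non-compact space $\n$, so their total intersection can perfectly well be empty. Passing to an ultrafilter or nonstandard limit only produces $t\in{}^{\ast}\n$ with $t+B+C\subseteq{}^{\ast}A$, i.e., that $B+C$ is \emph{finitely embedded} in $A$ --- a strictly weaker conclusion, and one the present paper is careful to distinguish from actual containment (its Theorem \ref{finemb} is about precisely this notion). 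This translate-stabilization problem is the known obstruction that kept the conjecture open for decades, and it is why the Moreira--Richter--Robertson proof does not run through the naive Furstenberg correspondence at all, but instead builds $B$ and $C$ inside $A$ itself using ultrafilter limits and almost-periodicity arguments. Beyond this, you candidly concede that coordinating the two infinite inductions is ``essentially the entire content of the theorem'' and is not supplied by your sketch; a proposal whose author certifies that its core is missing is a research program, not a proof.
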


The first progress on this conjecture was due to Nathanson \cite{nathanson}:

\begin{fact}
Suppose that $\BD(A)>0$.  Then for any $n\in \n$, there are $B,C\subseteq \n$ such that $B$ is infinite, $|C|= n$, and $B+C\subseteq A$.
\end{fact}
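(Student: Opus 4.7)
The plan is to give an iterated doubling argument in the spirit of the paper. Since $\BD(A)>0$, fix an infinite hyperfinite interval $I$ on which $\starA$ has noninfinitesimal relative density. The goal reduces to exhibiting a finite $C\subseteq\n$ with $|C|=n$ such that $B:=\bigcap_{c\in C}(A-c)\subseteq\n$ is infinite; then $B+C\subseteq A$ is immediate.

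The central tool is the following doubling lemma: if $S\subseteq\z$ is finite and $D_S:=\bigcap_{s\in S}(\starA-s)$ has noninfinitesimal density on some infinite hyperfinite interval $J$, then there exist $d\in\z\setminus(S-S)$ and an infinite hyperfinite subinterval $J'\subseteq J$ on which $D_{S\cup(S+d)}=D_S\cap(D_S-d)$ has noninfinitesimal density. To prove it, first apply the Lebesgue density theorem (used throughout the paper) to pass to an infinite subinterval $J^{\ast}\subseteq J$ on which $D_S$ has density at least $1-\eta$ for any prescribed standard $\eta>0$. For any $d$ with $|d|\leq|J^{\ast}|/2$, a union bound then yields that $D_S\cap(D_S-d)$ has density at least $1-4\eta$ on $J':=J^{\ast}\cap(J^{\ast}-d)$, still of infinite length. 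Since $S-S$ is a finite standard set while the range of candidate $d$'s is hyperfinite, we may additionally arrange $d\notin S-S$, forcing $|S\cup(S+d)|=2|S|$.

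Iterate the doubling lemma $\lceil\log_2 n\rceil$ times, starting with $S_0:=\{0\}$ and $J_0:=I$, to obtain $S_k\subseteq\z$ with $|S_k|\geq n$ and an infinite hyperfinite interval $J_k$ on which $D_{S_k}$ has noninfinitesimal density. Pick any $n$-subset $S^{\ast}\subseteq S_k$ and set $m:=-\min S^{\ast}$, so that $C:=\{s+m:s\in S^{\ast}\}\subseteq\n$ has cardinality $n$. By shift invariance, $\bigcap_{c\in C}(\starA-c)=D_{S^{\ast}}-m$ has noninfinitesimal density on the infinite hyperfinite interval $J_k-m$, hence contains hyperfinitely many elements there. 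By transfer, $B:=\bigcap_{c\in C}(A-c)$ is infinite in $\n$, completing the proof.

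The principal subtlety lies in the doubling step, where a single $d$ must simultaneously preserve positive density and avoid the finite exceptional set $S-S$. Both constraints are handled because the Lebesgue density theorem first concentrates $D_S$ on a subinterval of density near $1$, making the union bound essentially automatic, and because the finite set $S-S$ is easily dominated by the hyperfinite range of candidate $d$'s. The iteration count $\lceil\log_2 n\rceil$ is finite, and the density deficit at each step can be reset arbitrarily small by a fresh application of Lebesgue density, so the final density remains noninfinitesimal and transfers to the infinitude of $B$ in $\n$.
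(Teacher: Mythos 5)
Your overall strategy (iteratively intersecting $A$ with shifts of itself while keeping the density noninfinitesimal, then transferring down) is essentially the route the paper takes, namely repeated application of Kazhdan's lemma: there are arbitrarily large $t$ with $\BD(A\cap(A-t))>0$. The problem is that your proof of the doubling lemma --- the one piece of real content in the proposal --- does not work. The step ``apply the Lebesgue density theorem to pass to an infinite subinterval $J^{\ast}\subseteq J$ on which $D_S$ has density at least $1-\eta$'' is false if ``density'' means relative counting density $|D_S\cap J^{\ast}|/|J^{\ast}|$: the Lebesgue density theorem concerns the Lebesgue measure of the standard-part image $\st_J(D_S)\subseteq[0,1]$, not counting density, and an internal set of noninfinitesimal counting density need not have counting density near $1$ on any infinite subinterval (the even numbers have density exactly $\frac{1}{2}$ on every such subinterval; more decisively, the supremum of the densities of $\starA$ over infinite intervals is $\BD(A)$ itself, so already at the first step no boosting beyond $\BD(A)$ is possible). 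If instead you interpret ``density'' as $\lambda_J(D_S)$, then the Lebesgue density step is legitimate but the union bound collapses, because $\st_{J'}(X\cap Y)$ can be much smaller than $\st_{J'}(X)\cap \st_{J'}(Y)$: the evens and the odds each have $\lambda_{J'}$-measure $1$ on any infinite $J'$ yet are disjoint. Either way the inductive step is unproved.

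The standard repair is exactly the nonstandard proof of Kazhdan's lemma that the paper points to in \cite{DiGoldLup}: if $D_S$ has Loeb counting density $\alpha\not\approx 0$ on $J$ and $k>2/\alpha$ is a standard integer, then among the translates $D_S-d$ for $0\le d\le k$ some two must overlap in a set of noninfinitesimal counting density on $J$, since otherwise the density of their union would exceed $1$; the difference of the two shifts is the desired $d$, and it is automatically standard. This last point matters: your $d$ must be a \emph{standard} integer at every stage, since otherwise $S$ is no longer a finite subset of $\z$, the final $C$ is not a subset of $\n$, and the concluding transfer to ``$B$ is infinite'' is unavailable. So your remark that the ``hyperfinite range of candidate $d$'s'' dominates $S-S$ is aimed at the wrong target; with the pigeonhole argument above, avoiding the finite set $S-S$ is handled by first replacing $A$ by $A\cap(A-t)$ for a suitably large standard $t$ (which Kazhdan's lemma permits), or simply by noting that one application of the lemma to $D_S$ restricted to a congruence class produces $d$ outside any prescribed finite set.
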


Nathanson's result follows immediately from repeated applications of the following fact, which he attributes to Kazhdan in \cite{nathanson}:

\begin{fact}
Suppose that $\BD(A)>0$.  Then there are arbitrarily large $t\in \n$ such that $\BD(A\cap (A-t))>0$.
\end{fact}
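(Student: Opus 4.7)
The plan is a Cauchy--Schwarz argument (in the style of Khintchine recurrence) applied to finitely many finite translates of $\starA$ on a single infinite hyperfinite interval. Set $\alpha := \BD(A) > 0$ and fix $M \in \n$; the goal is to produce a \emph{standard} $t \geq M$ with $\BD(A \cap (A-t)) > 0$.

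Using the nonstandard characterization of Banach density, I would first pick an infinite hyperfinite interval $I \subseteq \starN$ with $\st(|\starA \cap I|/|I|) = \alpha$. Then I would choose a standard parameter $K \in \n$ (to be fixed at the end in terms of $\alpha$ and $M$) and, for $j = 0, 1, \dots, K$, let $f_j$ be the indicator of $(\starA - j) \cap I$ viewed on the Loeb probability space on $I$. Each $f_j$ has integral approximately $\alpha$, since the shift by the standard amount $j$ introduces only infinitesimal boundary loss. Cauchy--Schwarz then gives
\[
\sum_{i,j=0}^{K} \int f_i f_j \;=\; \int\Bigl(\sum_{j=0}^{K} f_j\Bigr)^{\!2} \;\geq\; \Bigl(\int \sum_{j=0}^{K} f_j\Bigr)^{\!2} \;\approx\; (K+1)^{2}\alpha^{2}.
\]
Peeling off the diagonal $\sum_j \int f_j^{2} \approx (K+1)\alpha$ (each $f_j$ being $\{0,1\}$-valued) and discarding the at most $M(K+1)$ off-diagonal pairs with $0 < |i-j| < M$ (each contributing at most $\alpha$), choosing $K$ so that $(K+1)\alpha \gg M$ forces
\[
\sum_{\substack{0 \leq i < j \leq K \\ j - i \geq M}} \int f_i f_j \;\gtrsim\; (K+1)^{2}\alpha^{2}/4.
\]

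Averaging over the at most $(K+1)^{2}/2$ remaining index pairs then produces $i < j$ with $t := j - i \in [M, K]$ and $\int f_i f_j \geq \alpha^{2}/3$. The decisive point is that this $t$ is \emph{standard}, because $K$ was fixed as a standard natural \emph{before} passing to the infinite $I$. Translating by $-i$ (which preserves Loeb measure up to an infinitesimal, since $i$ is standard) converts $\int f_i f_j$ into $\st(|\starA \cap (\starA - t) \cap I|/|I|)$; the nonstandard characterization of Banach density then yields $\BD(A\cap (A-t)) \geq \alpha^{2}/3 > 0$, as required. The main obstacle I anticipate is purely bookkeeping --- tracking which parameters remain standard so that the pigeonhole over $[M, K]$ delivers a standard $t$ --- rather than any genuine analytic difficulty; once the translates are fixed finitely in number, the Cauchy--Schwarz lower bound and the diagonal/near-diagonal peeling are essentially mechanical.
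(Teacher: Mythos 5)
Your argument is correct, but note that the paper itself offers no proof of this Fact: it is quoted from Nathanson (who attributes it to Kazhdan), and the authors merely remark that the published proof is complicated while a ``very simple nonstandard proof'' exists in \cite{DiGoldLup}. That simple proof is a union-bound argument rather than a Cauchy--Schwarz one: assuming $\BD(A\cap(A-t))=0$ for all $t\geq M$, one picks standard shifts $t_1<\dots<t_k$ with consecutive differences at least $M$, observes that on an infinite hyperfinite interval $I$ witnessing $\BD(A)=\alpha$ each translate $\starA-t_i$ has Loeb measure $\alpha$ while the pairwise intersections have Loeb measure $0$ (since $\BD(B)=0$ forces density $\approx 0$ on \emph{every} infinite hyperfinite interval), whence $k\alpha\leq 1$ for all $k$, a contradiction. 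Your Khintchine-style route via $\int(\sum_j f_j)^2\geq(\int\sum_j f_j)^2$ is a genuinely different, direct argument, and it buys more: a quantitative lower bound $\BD(A\cap(A-t))\gtrsim\alpha^2$ and, with trivial extra bookkeeping, the syndeticity of the return set $\{t:\BD(A\cap(A-t))>0\}$ (which is what the paper's supra-SIM analogue actually asserts), whereas the union-bound proof only yields nonemptiness of $\{t\geq M\}$ in that set. The steps you flag as bookkeeping are indeed harmless: the shift by a standard $j$ changes $|\starA\cap I|$ by at most $2j$, hence $\int f_j\approx\alpha$; the near-diagonal ordered pairs number at most $2M(K+1)$ rather than $M(K+1)$, which your choice of $K\gg M/\alpha$ absorbs; $t=j-i$ is standard because $i,j\leq K$ with $K$ fixed in advance; and ${}^*(A\cap(A-t))=\starA\cap(\starA-t)$ by transfer since $t$ is standard, so the final appeal to the nonstandard characterization of Banach density is legitimate.
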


We remark in passing that the proof of Kazhdan's lemma appearing in \cite{nathanson} is quite complicated but that it is possible to give a very simple nonstandard proof as in \cite{DiGoldLup}.

In this subsection, we prove the supra-SIM version of Nathanson's result.  First, we should mention that, building somewhat upon ideas from \cite{canada}, Moreira, Richter, and Robertson positively settle the Erd\H os conjecture in \cite{Moreira}, even weakening the hypothesis to positive Banach density and also proving a version for countable amenable groups.

Here is the Kazhdan lemma for supra-SIM sets:

\begin{prop}
(\bigskip Kazhdan Lemma for supra-SIM sets) Suppose that $A\subseteq \n$ is supra-SIM and set $\mathcal{T}_A:=\{t\in \n \ : \ A\cap (A-t) \text{ is supra-SIM}\}$.  Then $\mathcal{T}_A$ is syndetic.

\end{prop}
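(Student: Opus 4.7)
The plan is to reduce to the case where $A = B$ is itself SIM and then to combine the internal partition regularity of the enhanced IM property (Theorem \ref{IMpartreg}) with Fact \ref{corIMmain} to extract, from each sufficiently long window of $\n$, a translation $t$ with $A \cap (A - t)$ supra-SIM. The reduction is elementary: if $B \subseteq A$ is SIM then $B \cap (B - t) \subseteq A \cap (A - t)$, and since supra-SIM is upward closed under inclusion this gives $\mathcal{T}_B \subseteq \mathcal{T}_A$, so syndeticity of $\mathcal{T}_B$ suffices.

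Concretely, first I would invoke Fact \ref{comparable} together with the Lebesgue density theorem to pass to an infinite hyperfinite interval $I$ on which $\starB$ has the IM property and $\lambda_I(\starB) = 1$, and then apply Fact \ref{corIMmain} to two copies of $\starB$ on $I$ to produce an integer $w \in \n$. The core claim is that $\mathcal{T}_B$ has gaps of length at most $2w + 1$: for every $s \in \n$ with $s > w$, the window $F := [s - w, s + w]$ meets $\mathcal{T}_B$.

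To prove this claim I would partition $\starB \cap I$ internally by $F$-signature: for each $S \subseteq F$, set $B_S := \{x \in \starB \cap I : \{t \in F : x + t \in \starB\} = S\}$. Each $B_S$ is of the form ${}^{\ast}E_S \cap I$ for the standard set $E_S := \{x \in B : \{t \in F : x + t \in B\} = S\}$. Theorem \ref{IMpartreg} supplies some $B_{S^*}$ with the enhanced IM property on an infinite $J \subseteq I$; then so does ${}^{\ast}E_{S^*}$, and Theorem \ref{suprachar} yields that $E_{S^*}$ is supra-SIM. If $S^* \neq \emptyset$, then for any $t^* \in S^*$ we have $E_{S^*} \subseteq B \cap (B - t^*)$, so $B \cap (B - t^*)$ is supra-SIM and $t^* \in \mathcal{T}_B \cap F$, finishing the argument.

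The main obstacle is ruling out the remaining case $S^* = \emptyset$, in which the standard set $E_\emptyset := \{x \in B : x + t \notin B \text{ for all } t \in F\}$ would itself be supra-SIM. To derive a contradiction, I would pick a SIM subset $E' \subseteq E_\emptyset$, pass by Fact \ref{comparable} and Lebesgue density to an infinite interval $I'$ with $\lambda_{I'}({}^{\ast}E') = 1$, and apply Fact \ref{corIMmain} to the pair $(\starB, {}^{\ast}E')$ on $(I, I')$ with $a_1 = a + s$ and $a_2 = a$ to produce some $y \in {}^{\ast}E'$ and $x \in \starB$ with $x - y \in [s - w'', s + w'']$, where $w''$ is the constant from this application. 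Since $y \in {}^{\ast}E' \subseteq {}^{\ast}E_\emptyset$ forbids $y + t \in \starB$ for $t \in F$, the sought contradiction follows once $x - y \in F$, i.e., once $w'' \leq w$. The delicate step of the proof is therefore to arrange this uniform bound on $w''$, which I would handle by carefully tracing how the constants in Fact \ref{corIMmain} depend on the IM parameters of the input sets and exploiting that both $\starB$ on $I$ and ${}^{\ast}E'$ on $I'$ have $\lambda$-measure $1$.
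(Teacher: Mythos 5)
Your reduction to a SIM set $B$, your signature partition $B_S={}^{\ast}E_S\cap I$, your use of Theorem \ref{IMpartreg} to locate a piece $B_{S^*}$ with the enhanced IM property, and your appeal to Theorem \ref{suprachar} to conclude that $E_{S^*}$ is supra-SIM are all sound, and for $S^*\neq\emptyset$ this does put a point of $F$ into $\mathcal{T}_B$. But the case $S^*=\emptyset$ is a genuine gap, and the repair you sketch does not close it. The constant $w''$ produced by Fact \ref{corIMmain} for the pair $(\starB,{}^{\ast}E')$ depends on the IM moduli $\delta({}^{\ast}E',I',\epsilon)$ of the set $E'$, and $E'$ is a SIM subset of $E_\emptyset$, which was itself defined using the window $F=[s-w,s+w]$, hence using $w$. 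So $w$ is fixed before $E'$ exists, $w''$ is determined only after $E'$ is chosen, and nothing forces $w''\leq w$: the set $E'$ can have a much worse interval-measure modulus than $B$, forcing $w''$ to be much larger. Enlarging $F$ to $[s-w'',s+w'']$ changes $E_\emptyset$ and hence $w''$ again, so the dependence is circular. Having $\lambda_I(\starB)=\lambda_{I'}({}^{\ast}E')=1$ does not help, since the bound in Fact \ref{corIMmain} comes from the $\delta(\cdot,\cdot,\epsilon)$ functions, not from the measures.

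The paper's proof sidesteps the empty-signature case entirely by reversing the order of operations: instead of partitioning all of $\starB\cap I$ and then worrying about the points with no $\starB$-translate in the window, it first proves that the set of points with \emph{nonempty} signature, namely $\starB\cap B_t$ with $B_t:=\bigcup_{k=0}^{w}(\starB-(t+k))$, already has the enhanced IM property on $I$. The key observation is that for any subinterval $J$ of $I$ with $\lambda_J(\starB)>0$, one can apply Fact \ref{corIMmain} to \emph{two base points $a_1$ and $a_1+t$ inside $I$ for the same set $\starB$}, using the single constant $w$ fixed at the outset; this produces a point of $\starB\cap B_t$ in $J$, so $\lambda_J(\starB\cap B_t)=\lambda_J(\starB)$ for every infinite $J\subseteq I$. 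Only then is Theorem \ref{IMpartreg} applied, to the decomposition $\starB\cap B_t=\bigcup_{k=0}^{w}\bigl(\starB\cap(\starB-(t+k))\bigr)$, so every cell of the partition corresponds by construction to an actual translate $t+k$, and the problematic empty cell never arises. If you replace your signature partition by this two-step argument (largeness of the nonempty-signature set first, partition second), your proof goes through with gap bound $w$ rather than $2w+1$.
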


\begin{proof}
%
%
Suppose that $^{\ast}A$ has the enhanced IM\ property on the interval $I$.  Let $w\in\mathbb{N}$ be as in Fact \ref{corIMmain} for $A_{1}:=A_{2}:=$ $^{\ast}A$ and $I_{1}:=I_{2}:=I,$ for some
appropriately small $\varepsilon$ and corresponding $\delta$. \ We show that $\mathcal{T}_A$ has no gaps of length larger than $w$.  Towards this end, fix $t\in \n$ and set%
\[
B_{t}:=\bigcup_{k=0}^{w}\left(  \text{ }^{\ast}A-(t+k)\right)  \text{.}%
\]

\noindent \textbf{Claim:}  If $J$ is any subinterval of $I$ on which $\lambda_{J}($ $^{\ast}A)>0,$then
\[
^{\ast}A\cap B_{t}\cap J\neq\varnothing.
\]

\

\noindent \textbf{Proof of the Claim:}  By the Lebesgue density theorem, we may choose $[a_{1},b]\subset J$ with sufficiently small gap that
we may apply Fact \ref{corIMmain} with $a_{2}:=a_{1}+t$.  This allows us to find a $c$ with $c+w\leq b$ such that
\begin{align*}
^{\ast}A\cap\lbrack a_{1}+c,a_{1}+c+w]  & \neq\varnothing\\
^{\ast}A\cap\lbrack a_{1}+t+c,a_{1}+t+c+w]  & \neq\varnothing.
\end{align*}
This is equivalent to:
\begin{align*}
^{\ast}A\cap\lbrack a_{1}+c,a_{1}+c+w]  & \neq\varnothing\\
\left(  ^{\ast}A-t\right)  \cap\lbrack a_{1}+c,a_{1}+c+w]  & \neq\varnothing.
\end{align*}

Let $d$ be an element in $^{\ast}A\cap\lbrack a_{1}+c,a_{1}+c+w]$. $\ $That
same $d$ must then be in $B_{t}$ since it is within $w$ of an element
in\ $\left(  ^{\ast}A-t\right)  $, and this completes the proof of the claim.

The claim implies that, for any infinite subinterval $J$ of $I$, we have that
\[
\lambda_{J}(\text{ }^{\ast}A\cap B_{t})=\lambda_{J}(\text{ }^{\ast}A),
\]
as $J$ cannot contain any infinite intervals in the complement of $^{\ast
}A\cap B_{t}$ that have positive $^{\ast}A$ measure.  It follows immediately that $^{\ast}A\cap B_{t}$ has the enhanced IM property on $I$.  By Theorem \ref{IMpartreg}, it follows that for some $k=0,...,w$, we have that
\[
^{\ast}A\cap(^{\ast}A-(t+k))
\]
has the enhanced IM property on some infinite subinterval of $I$.  For this $k$, it follows that $A\cap (A-(t+k))$ is supra-SIM.

%

\end{proof}

As in the case of the original Nathanson result, repeated application of the previous proposition implies:

\begin{cor}  (Nathanson's theorem for supra-SIM sets)
Suppose that $A$ is supra-SIM.  Then for any $n\in \n$, there is an infinite $B\subseteq A$ and $C\subseteq \n$ with $|C|=n$ such that $B+C\subseteq A$.
\end{cor}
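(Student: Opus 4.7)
The plan is to iterate the Kazhdan lemma for supra-SIM sets a total of $n-1$ times. Setting $A_0:=A$, suppose inductively that $A_{i-1}$ has been defined and is supra-SIM. Applying the Kazhdan lemma to $A_{i-1}$ shows that $\mathcal{T}_{A_{i-1}}$ is syndetic, hence infinite, so I can extract a positive integer $t_i\in \mathcal{T}_{A_{i-1}}$ and set $A_i:=A_{i-1}\cap (A_{i-1}-t_i)$, which is supra-SIM by the very definition of $\mathcal{T}_{A_{i-1}}$. After $n-1$ such steps I obtain a descending chain $A_0\supseteq A_1\supseteq \cdots \supseteq A_{n-1}$ of supra-SIM sets together with positive integers $t_1,\ldots,t_{n-1}$.

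A routine induction on $i$ would then confirm the identity
$$A_i \;=\; \bigcap_{S\subseteq\{1,\ldots,i\}}\Bigl(A-\sum_{j\in S}t_j\Bigr),$$
using the inductive hypothesis and the fact that $A_i=A_{i-1}\cap(A_{i-1}-t_i)$ splits the subsets of $\{1,\ldots,i\}$ according to whether they contain $i$. In particular, every element of $A_{n-1}$ lies in $A-(t_1+\cdots+t_k)$ for each $0\leq k\leq n-1$. I would then take $B:=A_{n-1}$, which is infinite because every supra-SIM set contains a SIM set and a SIM set is infinite (it has $\lambda_I(\starA)>0$ on some infinite hyperfinite interval), and
$$C:=\{0,\;t_1,\;t_1+t_2,\;\ldots,\;t_1+\cdots+t_{n-1}\},$$
which has exactly $n$ distinct elements since each $t_i$ is positive. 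The displayed identity immediately gives $B+C\subseteq A$.

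The only step requiring any care is the extraction of a \emph{positive} $t_i$ from $\mathcal{T}_{A_{i-1}}$ at each stage, but this is automatic from syndeticity (a syndetic subset of $\n$ is necessarily infinite, so in particular contains positive elements). I therefore do not anticipate any genuine obstacle; the argument is essentially a bookkeeping iteration of the Kazhdan lemma just proved, mirroring Nathanson's original deduction in the positive Banach density setting.
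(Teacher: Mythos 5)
Your argument is correct and is exactly the iteration the paper intends: the paper's entire proof is the remark that the corollary follows from repeated application of the preceding Kazhdan lemma, and your bookkeeping (the identity $A_i=\bigcap_{S\subseteq\{1,\ldots,i\}}(A-\sum_{j\in S}t_j)$, taking $B:=A_{n-1}$ and $C$ the set of prefix sums) spells that out faithfully. The only cosmetic point is that if $0\notin\n$ under the paper's conventions you should iterate $n$ times and take $C:=\{t_1,\ t_1+t_2,\ \ldots,\ t_1+\cdots+t_n\}$ instead of including $0$ in $C$.
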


Of course, we should ask:

\begin{question}
Suppose that $A$ is supra-SIM.  Do there exist infinite $B,C\subseteq \n$ such that $B+C\subseteq A$?
\end{question}

\end{document}